\newcounter{numb}
\theoremstyle{plain} 
\newtheorem{theorem}{Theorem}[section]
\newtheorem{lemma}[theorem]{Lemma}
\newtheorem{remark}[theorem]{Remark}
\numberwithin{equation}{section}
\newcommand{\al}{\alpha}
\newcommand{\br}{\mathbf {R}}
\newcommand{\x}{\mathcal{X}}
\newcommand{\w}{\wedge}
\newcommand{\ld}{\ldots}
\newcommand{\p} {\partial}
\newcommand{\g} {\mathfrak {g}}
\newcommand{\h}{\mathfrak {h}}
\newcommand{\I}{\mathfrak {I}}
\newcommand{\so}{\mathfrak {so}}
\begin{document}
\title{Leibniz Homology of the Affine Indefinite Orthogonal Lie Algebra}
\author{Guy Roger Biyogmam}
\date{}
\maketitle{.\\Department of Mathematics,\\Southwestern Oklahoma State University,\\Weatherford, OK 73096, USA\\ \textit{Email:}\textit{guy.biyogmam@swosu.edu}}
\begin{abstract}
In this paper, we compute the Leibniz (co)homology of  the  affine indefinite orthogonal Lie algebra. This calculation  generalizes a result  \cite[corollary 4.5]{JL}  obtained by Jerry Lodder.  We  construct several  indefinite orthogonal   invariants  in terms of balanced tensors and provide the Leibniz homology in terms of these invariants.
\end{abstract}
\textbf{Mathematics Subject Classifications(2000):} 17B56, 17A32, 17B99.\\
\textbf{Key Words}: Leibniz algebras, Leibniz homology, Indefinite orthogonal Lie algebra. 
\section{Introduction}
The indefinite orthogonal group is among  the most important groups that are broadly used in physics. Particular cases of interest are made explicit in a variety of papers and their importance, especially in quantum field theory \cite{WS}, higher energy physics \cite{LM} and cosmology \cite{KB}, make them remarkable. Just to mention a few, we point out the conformal group $SO(4,2)$ of the Minkowski space \cite{KHA} also known as the dynamical group of the $3$-dimensional non-realistic quantum mechanic Kepler problem, the deSitter group $SO(4,1)$ and the anti-deSitter group $SO(3,2),$ the Lorentz group $SO(3,1)$ \cite{W}, the split orthogonal group $SO(n,n)$ which is shown to be a Chevalley group \cite{VA}, and the quasi-split orthogonal group $SO(n,n+1)$  which is shown to be a Steinberg group \cite{CPS}. Also, the representation theory \cite{W, RJ, BZ, T} of these groups have been seriously investigated. Recall that the indefinite orthogonal group \cite{HS} $O(p,q),$ $p,q\in \mathbb{N}$ with  $p+q=n,$ consists of the matrix $M\in GL(n,\br)$ satisfying $M^TI_{p,q}M=I_{p,q}$ where  

$$I_{p,q}=\begin{bmatrix} I_p & 0 \\ 0 & -I_q \end{bmatrix}$$

with $I_k$ denoting the $k\times k$ identity matrix. Note that $O(p,q)\cong O(q,p)$ via the isomorphisms $\phi; O(p,q)\rightarrow O(q,p)$ with $\phi( M)=\sigma M\sigma$ where $\sigma$ is the skew diagonal matrix $$\sigma=\begin{bmatrix} 0 & \cdots & 1 \\ \vdots &  \iddots & \vdots \\ 1  &  \cdots & 0 \end{bmatrix}$$

With the Poincare group $\br^{3,1}\rtimes SO(3,1)$ as a model of departure, we focus  for $n=p+q,$ $p,q\geq 1$ integers, on the affine indefinite group $\br^{p,q}\rtimes SO(p,q)$  where $\br^{p,q}\cong\br^ p\oplus\br^q$ is the real vector space equipped with the quadratic form $$(u,v)=\sum_{i=1}^px_iy_i-\sum _{i=p+1}^{p+q}x_iy_i$$ with     $u=(x_1,\ldots,x_{p+q})$ and $v=(y_1,\ldots,y_{p+q})$. Note that $SO(p,q)$ acts on $\br^{ p,q}$ via left multiplication or standard representation.

Denote by $\h_n,$ the Lie algebra of the affine indefinite orthogonal group. We calculate in this paper several indefinite orthogonal invariants, and $\h_n$-invariants which are detected by Leibniz homology via Lodder's structure theorem \cite{JL}. The tools used to compute these invariants are inspired by the author's previous work for the definite case. The main result of the paper is  the  isomorphism of graded vector spaces $$HL_*(\h_n)\cong (\br\oplus\left\langle \tilde{\alpha}_{p,q}\right\rangle)\otimes T^*(\tilde{\gamma}_{p,q}),$$

 where $\left\langle \tilde{\alpha}_{p,q}\right\rangle$  denotes a 1-dimensional vector space in degree $n$
on $$\tilde{\alpha}_n=\sum_{\sigma\in S_n}\mbox{sgn} (\sigma)\frac{\p}{\p x^{\sigma(1)}}\otimes\frac{\p}{\p x^{\sigma(2)}}\otimes\frac{\p}{\p x^{\sigma(3)}}\otimes\ldots\otimes\frac{\p}{\p x^{\sigma(n)}}$$  and $T^*(\tilde{\gamma}_{p,q})$ denotes the tensor algebra on the $(n-1)$-degree
generator  $\tilde{\gamma}_{p,q}=\bar{\gamma}_{p,q}-\bar{\gamma}'_{p,q}$ with 

 $$
\begin{aligned}\bar{\gamma}_{p,q}=&\frac{1}{n!}\big(~\sum_{\substack{1\leq i<j\leq p, \\ \sigma\in S_{n-2}}}(-1)^{i+j+1}sgn(\sigma) X_{ij}\otimes\frac{\p}{\p x^{\sigma(1)}}\otimes\ldots\widehat{\frac{\p}{\p x^{\sigma(i)}}}\ldots\widehat{\frac{\p}{\p x^{\sigma(j)}}}\ldots\otimes\frac{\p}{\p x^{\sigma(n)}}\\ &- \sum_{\substack{p+1\leq i<j\leq n\\ \sigma\in S_{n-2}}} (-1)^{i+j+p+1}sgn(\sigma) X_{ij}\otimes\frac{\p}{\p x^{\sigma(1)}}\otimes\ldots\widehat{\frac{\p}{\p x^{\sigma(i)}}}\ldots\widehat{\frac{\p}{\p x^{\sigma(j)}}}\ldots\otimes\frac{\p}{\p x^{\sigma(n)}}\\ &+ \sum_{\substack{1\leq i\leq p\\ p+1\leq j\leq n\\ \sigma\in S_{n-2}}} (-1)^{i+j}sgn(\sigma)Y_{ij}\otimes\frac{\p}{\p x^{\sigma(1)}}\otimes\ldots\widehat{\frac{\p}{\p x^{\sigma(i)}}}\ldots\widehat{\frac{\p}{\p x^{\sigma(j)}}}\ldots\otimes\frac{\p}{\p x^{\sigma(n)}}~\big)
\end{aligned}
$$   and

 $$
\begin{aligned}\bar{\gamma}'_{p,q}=&\frac{1}{n!}\big(~\sum_{\substack{1\leq i<j\leq p, \\ \sigma\in S_{n-2}}}(-1)^{i+j+1}sgn(\sigma)\frac{\p}{\p x^{\sigma(1)}}\otimes\ldots\widehat{\frac{\p}{\p x^{\sigma(i)}}}\ldots\widehat{\frac{\p}{\p x^{\sigma(j)}}}\ldots\otimes\frac{\p}{\p x^{\sigma(n)}}\otimes X_{ij}\\ &- \sum_{\substack{p+1\leq i<j\leq n\\ \sigma\in S_{n-2}}}(-1)^{i+j+p+1}sgn(\sigma) \frac{\p}{\p x^{\sigma(1)}}\otimes\ldots\widehat{\frac{\p}{\p x^{\sigma(i)}}}\ldots\widehat{\frac{\p}{\p x^{\sigma(j)}}}\ldots\otimes\frac{\p}{\p x^{\sigma(n)}}\otimes X_{ij}\\ &+ \sum_{\substack{1\leq i\leq p\\ p+1\leq j\leq n\\ \sigma\in S_{n-2}}} (-1)^{i+j}sgn(\sigma)\frac{\p}{\p x^{\sigma(1)}}\otimes\ldots\widehat{\frac{\p}{\p x^{\sigma(i)}}}\ldots\widehat{\frac{\p}{\p x^{\sigma(j)}}}\ldots\otimes\frac{\p}{\p x^{\sigma(n)}}\otimes Y_{ij}~\big)
\end{aligned}
$$ 

This result generalizes Lodder's result obtained on the Poincar\'e group. We show in section 4 that $\tilde{\alpha}_{p,q}$ and $\tilde{\gamma}_{p,q}$ are  $\h_n$-invariant.  
 Dually for cohomology,  there is an isomorphism of dual Leibniz algebras $$HL^*(\h_n)\cong (\br\oplus\left\langle \tilde{\alpha}^d_{p,q}\right\rangle)\otimes T^*(\tilde{\gamma}^d_{p,q}),$$ where 
$\tilde{\alpha}^d_{p,q}$ and $\tilde{\gamma}^d_{p,q}$ are respectively  dual to $\tilde{\alpha}_{p,q}$ and $\tilde{\gamma}_{p,q}$ with respect to the basis of $\h_n.$

\section{The Affine Indefinite Orthogonal Lie Algebra}

Let $\g$ be the Lie algebra of the maximal compact subgroup $O(p)\times O(q)$ of $O(p,q).$ Then $\g\cong\so(p)\oplus\so(q)$ as Lie algebras. So there is a vector space isomorphism $$\h_n\cong \I_n\oplus\g\oplus\l$$ where $\I_n$ denotes the abelian Lie algebra of $\br^{p,q}$ and $\l$ is the orthogonal complement of $\g$ with respect to the killing form of $\so(p,q).$ Roughly speaking, $\h_n$ may be described as the Lie algebra of a special relativity consisting  of translations on space time, two non-abelian Lie algebras of $p$ and $q$-dimensional rotations, and boosts.

Assume that $\br^n$ is given the coordinates $(x_1,x_2,...,x_n),$ and let $\frac{\partial}{\partial x^i,}$ be the unit vector fields parallel to the $x_i$ axes respectively. It is easy to show that the Lie algebra $\so(p,q)$ endowed with the bracket of vector fields is generated by the  vector fields : $$X_{ij}:=-x_i\frac{\partial}{\partial x^j}+x_j\frac{\partial}{\partial x^i},~~~1\leq i< j\leq p,~p+1\leq i< j\leq n$$ and $$Y_{ij}:=x_i\frac{\partial}{\partial x^j}+x_j\frac{\partial}{\partial x^i},~~~1\leq i\leq p,~p+1\leq  j\leq n$$ where the $X_{ij}$ constitute a basis of $\g$ and the $Y_{ij}$ constitute a basis of $\l.$ Also, the Lie algebra  $\I_n$ has   vector space basis: $$\frac{\partial}{\partial x^i},~~1\leq i\leq n.$$ There is a short exact sequence of Lie algebras \cite[p.203]{KA} 
$$0\longrightarrow \I_n\stackrel{i}{\longrightarrow} \h_n\stackrel{\pi}{\longrightarrow} \so(p,q)\longrightarrow 0$$ where $i$ is the inclusion map and $\pi$ is the projection $$\h_n\longrightarrow (\h_n/\I_n)\cong \so(p,q).$$   The bracket on $\h_n\cong\I_n\oplus\so(p,q)$ can be  defined by 
$$[(m_1, x_1), (m_2, x_2)] = ([m_1, x_2] + [x_1, m_2],~[x_1, x_2])~~~~~~~\cite{LP}.$$ 
Also, the Lie algebra $\so(p,q)$  acts on $\I_n$ and on $\h_n$ via the bracket of vector fields. This action is extended to $\I^{\wedge k}_n$ by $$[\al_1\w\al_2\w\ldots\w\al_k,~X]=\sum^k_{i=1}\al_1\w\al_2\w\ldots\w [\al_i,~X]\w\ldots\w\al_k$$ for $\al_i\in\I_n,~X\in\so(p,q),$ and the action of $\so(p,q)$ on $\h_n\otimes\I_n^{\w k}$ is given by  $$[h\otimes\al_1\w\al_2\w\ldots\w\al_k,~X]=[h,~X]\otimes\al_1\w\ldots\w\al_k$$$$+\sum^k_{i=1}h\otimes\al_1\w\al_2\w\ldots\w [\al_i,~X]\w\ldots\w\al_k$$ for $h\in\h_n.$ 

\section{Some Invariants for the Indefinite Orthogonal Lie Algebras.}

In this section, we provide several modules of invariants  under the action of $\so(p,q).$   
Recall that for any  $\so(p,q)$-module $M,$ the submodule  $M^{\so(p,q)}$ of $\so(p,q)-$invariants is defined by $$M^{\so(p,q)}=\left\{ m\in M~|~ [m,~g]=0 ~ \mbox{for all}~g\in \so(p,q)\right\}.$$ 

\begin{lemma}\label{alpha}
 $$[\w^*(\I_n)]^{\so(p,q)}=\br\oplus \left\langle\alpha\right\rangle ~~~~~~~~~~ \mbox{for}~~~ n\geq 4$$ where $$\alpha= \frac{\p}{\p x^1}\w\frac{\p}{\p x^2}\w\ld\w\frac{\p}{\p x^n}, ~~ \mbox{ the volume element.}$$
\end{lemma}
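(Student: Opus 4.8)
The plan is to decompose the exterior algebra $\w^*(\I_n)$ into $\so(p,q)$-isotypic pieces degree by degree and show that only degrees $0$ and $n$ contribute invariants. Since $\I_n \cong \br^{p,q}$ is the standard representation of $\so(p,q)$, we have $\w^k(\I_n) \cong \w^k(\br^{p,q})$ as $\so(p,q)$-modules. For $n = p+q$, the top exterior power $\w^n(\br^{p,q})$ is one-dimensional and carries the trivial representation (since $\so(p,q) \subset \mathfrak{sl}_n$ acts with trace zero, hence trivially on the determinant line), giving the summand $\langle \alpha \rangle$ in degree $n$; degree $0$ contributes the scalars $\br$. So the content of the lemma is that $\w^k(\br^{p,q})$ has no nonzero invariants for $1 \le k \le n-1$.

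First I would recall (or cite the author's earlier work in the definite case, adapted here) that over $\br$ the indefinite orthogonal Lie algebra $\so(p,q)$ and its compact form $\so(p+q)$ — and indeed the complexification $\so(n,\mathbb{C})$ — have the same invariant theory on tensor powers of the standard representation: passing to the complexification does not change the dimension of the invariant subspace of any finite-dimensional rational representation. Hence it suffices to compute $[\w^k(\mathbb{C}^n)]^{\so(n,\mathbb{C})}$. Next I would invoke the first fundamental theorem of invariant theory for the orthogonal group: the $\so(n,\mathbb{C})$-invariants in $(\mathbb{C}^n)^{\otimes k}$ are spanned by complete contractions using the quadratic form (for $k$ even) together with the Levi-Civita/determinant tensor (in degrees $\equiv 0,\, n \bmod 2$ relative to $n$). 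Antisymmetrizing — i.e. restricting to $\w^k \subset (\mathbb{C}^n)^{\otimes k}$ — kills every contraction tensor built from copies of the symmetric form, because applying a transposition within a contracted pair changes nothing while antisymmetry demands a sign change. The only surviving antisymmetric invariant is the totally antisymmetric tensor, which lives precisely in degree $k = n$. This yields $[\w^k(\I_n)]^{\so(p,q)} = 0$ for $1 \le k \le n-1$, and the hypothesis $n \ge 4$ guarantees there is no low-degree coincidence (e.g.\ the $\so(3)$ vector-product identity in degree $2$, or exceptional isomorphisms in small rank) that could produce a spurious invariant.

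I expect the main obstacle to be making the reduction to the complex case and the application of the first fundamental theorem fully rigorous in the present \emph{real, indefinite} setting, rather than citing it as folklore: one must argue carefully that $\so(p,q)$-invariance over $\br$ is equivalent to $\so(n,\mathbb{C})$-invariance over $\mathbb{C}$ for these particular modules, since the real form does matter for representations in general (it does not here because $\w^k(\br^{p,q}) \otimes \mathbb{C} \cong \w^k(\mathbb{C}^n)$ is a rational $\so(n,\mathbb{C})$-module and invariants commute with the flat base change $\br \to \mathbb{C}$). A cleaner alternative, which I would actually prefer to write up, avoids invariant theory altogether: work directly with the basis $\{X_{ij}\}$, $\{Y_{ij}\}$ of $\so(p,q)$ and a monomial basis $\{\frac{\p}{\p x^{i_1}} \w \cdots \w \frac{\p}{\p x^{i_k}}\}$ of $\w^k(\I_n)$, compute the bracket action explicitly via the formula $[\al_1 \w \cdots \w \al_k, X] = \sum_i \al_1 \w \cdots \w [\al_i, X] \w \cdots \w \al_k$ given in Section 2, and show that for $1 \le k \le n-1$ the only solution to the resulting linear system $[\om, X_{ij}] = [\om, Y_{ij}] = 0$ for all generators is $\om = 0$ — using that acting by a single $X_{ij}$ or $Y_{ij}$ mixes a monomial containing $\frac{\p}{\p x^i}$ (but not $\frac{\p}{\p x^j}$) with the one obtained by swapping indices, forcing coefficients to match up in a way that propagates to all of $\w^k$ and, when $k < n$, collapses. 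The condition $n \ge 4$ ensures enough generators are present for this propagation argument to close.
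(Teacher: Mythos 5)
Your argument is correct, but it proceeds differently from the paper. The paper's own proof never complexifies in this lemma: it handles $k=0,1,n$ directly (citing the definite case \cite{BGR} for $k=1$), then splits the standard representation as $\I_n\cong\I_p\oplus\I_q$ and restricts the action to the maximal compact subalgebra $\so(p)\oplus\so(q)$, so that \cite[Lemma 4.1]{BGR} kills $[\I_n^{\w k}]^{\so(p,q)}$ for all $k\neq 0,1,p,q$; the two remaining degrees $p$ and $q$ are then eliminated by an explicit bracket computation with the boost $Y_{1n}$, which moves $\frac{\p}{\p x^1}\w\cdots\w\frac{\p}{\p x^p}$ and $\frac{\p}{\p x^{p+1}}\w\cdots\w\frac{\p}{\p x^n}$ off themselves. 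Your main route --- reduce to $\so(n,\mathbb{C})$ by flat base change (invariants are a kernel, so they commute with $\otimes_{\br}\mathbb{C}$) and then apply the first fundamental theorem, noting that antisymmetrization annihilates every invariant containing a symmetric-form factor and that the epsilon tensor lives only in degree $n$ --- treats all middle degrees uniformly and is in fact the same complexification trick the paper reserves for its dimension-counting Lemmas \ref{dimi} and \ref{dimo}; what it costs is an appeal to classical invariant theory (or to the known decomposition of $\w^k\mathbb{C}^n$), whereas the paper stays elementary by leaning on the previously computed definite case plus one boost computation. Your ``preferred'' second alternative (solving the linear system $[\om,X_{ij}]=[\om,Y_{ij}]=0$ on a monomial basis for all $1\leq k\leq n-1$) is only sketched; the coefficient-propagation step is exactly the nontrivial part, and the paper effectively carries out such a computation only in degrees $p$ and $q$ after the compact subalgebra has done the rest, so if you wrote that version up you would need to supply the propagation argument in full. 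Minor quibbles: your parenthetical about which degrees admit an epsilon factor is garbled (one epsilon plus $m$ metric factors gives degree $n+2m$, so it simply cannot occur for $k<n$), and the $\so(3)$ cross product is an equivariant map $\w^2\to V$, not an invariant vector, so it is not really what the hypothesis $n\geq 4$ is guarding against --- that hypothesis is inherited from the cited results of \cite{BGR}.
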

\begin{proof}
  Indeed, that $[\I_n^{\w 0}]^{\so(p,q)}=\br$ is clear. Also it is easy to show that   that $$[\I_n^{\w n}]^{\so(p,q)}=\left\langle \frac{\p}{\p x^1}\w\frac{\p}{\p x^2}\w\ld\w\frac{\p}{\p x^n}\right\rangle.$$ The proof that $[\I_n^{\w 1}]^{\so(p,q)}=\{0\}$ is identical to the definite case \cite{BGR}.  
The standard representation $\I_n$ splits as $\I_p\oplus\I_q$ where $\I_p=\left\langle \frac{\p}{\p x^1},\ld\frac{\p}{\p x^p}\right\rangle$ and $\I_q=\left\langle \frac{\p}{\p x^{p+1}},\ld\frac{\p}{\p x^n}\right\rangle.$ Now we have by  \cite[Lemma 4.1]{BGR} that $[\I_p^{\w k}]^{\so(p,q)}\subseteq [\I_p^{\w k}]^{\so(p)}=0$  for $k\neq 0,1,p$ and   $[\I_q^{\w k}]^{\so(p,q)}\subseteq [\I_q^{\w k}]^{\so(q)}=0$ for $k\neq 0,1,q.$  It follows that   $[\I_n^{\w k}]^{\so(p,q)}=0$ for all $k\neq 0,1,p,q.$ Now since $$[\frac{\p}{\p x^1}\w\frac{\p}{\p x^2}\w\ld\w\frac{\p}{\p x^p},~Y_{1n}]=\frac{\p}{\p x^n}\w\frac{\p}{\p x^2}\w\ld\w\frac{\p}{\p x^p}\neq 0$$ and $$[\frac{\p}{\p x^{p+1}}\w\frac{\p}{\p x^{p+2}}\w\ld\w\frac{\p}{\p x^n},~Y_{1n}]=\frac{\p}{\p x^{p+1}}\w\frac{\p}{\p x^{p+2}}\w\ld\w\frac{\p}{\p x^1}\neq 0,$$ it follows that  $[\I_n^{\w p}]^{\so(p,q)}=0$ and  $[\I_n^{\w q}]^{\so(p,q)}=0.$

\end{proof}

\begin{lemma}\label{dimi}
$$dim[\I_n\otimes\I_n^{\w k}]^{\so(p,q)}=\begin{cases} 1,& \mbox{if} ~~k=1,~n-1 \mbox{}\\ 0,& \mbox{else} \mbox{}\end{cases}$$
\end{lemma}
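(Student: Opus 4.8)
The plan is to decompose the $\so(p,q)$-module $\I_n \otimes \I_n^{\wedge k}$ as far as possible, using the splitting of the standard representation $\I_n \cong \I_p \oplus \I_q$ that was exploited in Lemma~\ref{alpha}, and then to pin down invariants case by case on the degree $k$. First I would expand
$$
\I_n \otimes \I_n^{\wedge k} \;\cong\; (\I_p \oplus \I_q) \otimes \bigoplus_{a+b=k} \I_p^{\wedge a} \otimes \I_q^{\wedge b},
$$
and note that an $\so(p,q)$-invariant, being in particular $\so(p)\oplus\so(q)$-invariant, must live in the subspace where each tensor factor from the $\I_p$-side is $\so(p)$-invariant and each from the $\I_q$-side is $\so(q)$-invariant. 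By \cite[Lemma 4.1]{BGR} the only $\so(p)$-invariant exterior powers of $\I_p$ occur in degrees $0$, $1$ (which is zero), and $p$; likewise for $\I_q$ in degrees $0,1,q$. Combined with the fact that $\I_p^{\wedge 1}$ and $\I_q^{\wedge 1}$ carry no invariants, this forces the surviving summands of $\I_n\otimes\I_n^{\wedge k}$ (before imposing the boost invariance) to have $(a,b)\in\{(0,0),(0,q),(p,0),(p,q),(1,0)\ \text{paired with the first tensor factor from }\I_p,\dots\}$ — more precisely, the first $\I_n$-factor supplies the single "$1$" that is allowed in either $\I_p$ or $\I_q$, so the candidate bidegrees for the whole $\I_n^{\wedge(k+1)}$-shaped object are exactly those with total degree $k+1$ built from $\{0,1,p\}+\{0,1,q\}$ with the $1$'s accounted for. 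This already kills all $k$ except $k=1$, $k=p-1$, $k=q-1$, $k=n-p = q$, $k=n-q=p$, and $k=n-1$.

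Next I would handle the genuinely nonzero cases. For $k=1$: $\I_n\otimes\I_n^{\wedge 1} = \I_n\otimes\I_n$ contains the invariant dual to the quadratic form $I_{p,q}$, namely $\sum_{i=1}^p \frac{\p}{\p x^i}\otimes\frac{\p}{\p x^i} - \sum_{i=p+1}^n \frac{\p}{\p x^i}\otimes\frac{\p}{\p x^i}$, and I would check it is killed by all $X_{ij}$ and all $Y_{ij}$, then argue it is the unique invariant (the standard representation $\I_n$ is irreducible under $\so(p,q)$, so $\mathrm{Hom}_{\so(p,q)}(\I_n,\I_n)$ is one-dimensional by Schur, giving $\dim[\I_n\otimes\I_n]^{\so(p,q)}=1$). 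For $k=n-1$: here $\I_n\otimes\I_n^{\wedge(n-1)}$; contracting, $\I_n^{\wedge(n-1)}\cong \I_n^*\otimes\I_n^{\wedge n}$ and $\I_n^{\wedge n}$ is the one-dimensional invariant $\alpha$ of Lemma~\ref{alpha}, so $[\I_n\otimes\I_n^{\wedge(n-1)}]^{\so(p,q)}\cong[\I_n\otimes\I_n^*]^{\so(p,q)}\cong\mathrm{End}_{\so(p,q)}(\I_n)$, again one-dimensional by Schur. The explicit generator is the "$\bar\gamma$-type" element obtained by wedging $\partial/\partial x^i$ against $\alpha$ with the sign conventions matching $\bar\gamma_{p,q}$ in the introduction.

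The main obstacle will be the intermediate bidegrees $k=p-1, q-1, p, q$ (when these differ from $1$ and $n-1$, i.e. when $p,q\geq 3$ are both genuine): there the $\so(p)\oplus\so(q)$-invariant subspace is nonzero, spanned by things like $\omega_p\otimes(\text{single }\partial/\partial x^j,\ j>p)$ where $\omega_p=\frac{\p}{\p x^1}\wedge\dots\wedge\frac{\p}{\p x^p}$, and one must show the boosts $Y_{ij}$ destroy all of them. I would do this exactly as in Lemma~\ref{alpha}: apply a single well-chosen $Y_{1n}$ and observe that it sends such a would-be invariant to a nonzero element of a complementary weight space (the computation $[\omega_p,Y_{1n}] = \frac{\p}{\p x^n}\wedge\frac{\p}{\p x^2}\wedge\dots\wedge\frac{\p}{\p x^p}\neq 0$ already appears there), so no nonzero combination survives. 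The bookkeeping — listing all $\so(p)\oplus\so(q)$-invariant tensors in each bad bidegree and checking a single boost annihilates none of them — is the routine but slightly lengthy part; everything else reduces to Schur's lemma and the dimension counts of \cite[Lemma 4.1]{BGR} and Lemma~\ref{alpha}.
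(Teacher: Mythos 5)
Your route is genuinely different from the paper's, but as written it has a real gap at its foundation. The reduction step rests on the principle that an $\so(p)\oplus\so(q)$-invariant of $(\I_p\oplus\I_q)\otimes\I_p^{\w a}\otimes\I_q^{\w b}$ must have ``each tensor factor from the $\I_p$-side $\so(p)$-invariant,'' and that is false: invariants of a tensor product under a diagonally acting algebra are not the tensor product of invariants of the factors. The correct statement is $[\I_p\otimes\I_p^{\w a}]^{\so(p)}\otimes[\I_q^{\w b}]^{\so(q)}$ (and its mirror), and the mixed module $[\I_p\otimes\I_p^{\w a}]^{\so(p)}$ is not controlled by \cite[Lemma 4.1]{BGR} (exterior powers) but by \cite[Lemma 4.2]{BGR}: it is nonzero for $a=1,p-1$ even though $[\I_p]^{\so(p)}=0$ (compare $\sum_i\frac{\p}{\p x^i}\otimes\frac{\p}{\p x^i}$). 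Because of this, your candidate list of degrees is off (it should be $k\in\{1,\,p\pm1,\,q\pm1,\,n-1\}$ rather than $\{1,p-1,q-1,p,q,n-1\}$), and your description of the would-be invariants in the intermediate degrees is wrong: $\omega_p\otimes\frac{\p}{\p x^j}$ with a single $j>p$ is not even $\so(q)$-invariant when $q\ge2$; the genuine $\so(p)\oplus\so(q)$-invariants there are contraction-type sums such as $\sum_{j>p}\frac{\p}{\p x^j}\otimes\omega_p\w\frac{\p}{\p x^j}$. Consequently the boost-elimination you defer as ``routine bookkeeping'' is not set up against the right spaces, and even for the right spaces it is the actual content of the lemma: showing that one $Y_{1n}$ sends each basis invariant to something nonzero does not rule out a nontrivial linear combination being killed. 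Finally, for $k=1$ and $k=n-1$ real Schur only gives that $\mathrm{End}_{\so(p,q)}(\I_n)$ is a division algebra; to get dimension one you need absolute irreducibility of the standard representation, i.e.\ you end up complexifying anyway.

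For comparison, the paper's proof is two lines: since $\so(p,q)$ and $\so(n)$ are real forms of the same complex algebra and taking invariants commutes with complexification, $[\I_n\otimes\I_n^{\w k}]^{\so(p,q)}$ and $[\I_n\otimes\I_n^{\w k}]^{\so(n)}$ have the same complexification, hence the same dimension, and the definite count is exactly \cite[Lemma 4.2]{BGR}. Your plan can be repaired into a self-contained real computation (correct the bidegree bookkeeping using the definite mixed invariants, then carry out an honest boost-elimination in degrees $p\pm1,q\pm1$, with separate care for small $p$ or $q$), but those repairs are precisely the work the proposal currently waves off.
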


\begin{proof}
Since $[\I_n\otimes\I_n^{\w k}]^{\so(p,q)}$ and $[\I_n\otimes\I_n^{\w k}]^{\so(n)}$  have the same complexification, it follows that they have the same dimension. We  conclude by \cite[lemma 4.2]{BGR}.

\end{proof}

\begin{lemma}\label{delta}
$$[\I_n\otimes\I_n]^{\so(p,q)}=\left\langle \delta_{p,q}\right\rangle ~~~~\mbox{for}~~  n\geq 4$$ where $$\delta_{p,q}=\sum_{i=1}^{p}\frac{\p}{\p x^i}\otimes\frac{\p}{\p x^i}-\sum_{i=p+1}^{n}\frac{\p}{\p x^i}\otimes\frac{\p}{\p x^i}.$$
\end{lemma}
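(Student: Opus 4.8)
The plan is to prove Lemma~\ref{delta} by combining the dimension count already established in Lemma~\ref{dimi} with an explicit verification that the proposed tensor $\delta_{p,q}$ is $\so(p,q)$-invariant and nonzero. By Lemma~\ref{dimi} with $k=1$, we know $\dim[\I_n\otimes\I_n]^{\so(p,q)}=1$, so it suffices to exhibit one nonzero invariant; the candidate $\delta_{p,q}$ is clearly nonzero since its summands are linearly independent basis tensors. Thus the entire content of the proof reduces to checking that $[\delta_{p,q},~X]=0$ for every generator $X$ of $\so(p,q)$, i.e. for each $X_{ij}$ and each $Y_{ij}$.

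First I would recall that the action of $\so(p,q)$ on $\I_n\otimes\I_n$ is by derivations: $[\tfrac{\p}{\p x^a}\otimes\tfrac{\p}{\p x^b},~X]=[\tfrac{\p}{\p x^a},~X]\otimes\tfrac{\p}{\p x^b}+\tfrac{\p}{\p x^a}\otimes[\tfrac{\p}{\p x^b},~X]$, where the bracket on the right is the bracket of vector fields. A direct computation gives $[\tfrac{\p}{\p x^a},~X_{ij}]=-\delta_{aj}\tfrac{\p}{\p x^i}+\delta_{ai}\tfrac{\p}{\p x^j}$ (for the indicated index ranges) and $[\tfrac{\p}{\p x^a},~Y_{ij}]=\delta_{aj}\tfrac{\p}{\p x^i}+\delta_{ai}\tfrac{\p}{\p x^j}$. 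I would then substitute these into $[\delta_{p,q},~X]$ and observe that the derivation only produces nonzero terms from the two summands of $\delta_{p,q}$ whose index matches $i$ or $j$. The key point is the sign bookkeeping: for $X_{ij}$ with $1\le i<j\le p$ both relevant summands come from the first (positive) block $\sum_{i=1}^p$, and the two contributions cancel because $X_{ij}$ is skew; similarly for $p+1\le i<j\le n$ both come from the second (negative) block and again cancel. For $Y_{ij}$ with $1\le i\le p<j\le n$, one relevant summand ($\tfrac{\p}{\p x^i}\otimes\tfrac{\p}{\p x^i}$) sits in the positive block and the other ($\tfrac{\p}{\p x^j}\otimes\tfrac{\p}{\p x^j}$) sits in the negative block, and the symmetry of $Y_{ij}$ together with the opposite signs $+$ and $-$ in $\delta_{p,q}$ is exactly what forces cancellation. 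This is the structural reason the quadratic form $I_{p,q}$ (rather than $I_n$) must appear in the invariant.

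The step I expect to be the only real obstacle is keeping the signs straight in the $Y_{ij}$ computation, since that is where the indefinite signature genuinely intervenes and where a naive guess ($\sum\tfrac{\p}{\p x^i}\otimes\tfrac{\p}{\p x^i}$ with all plus signs) fails. Concretely, computing $[\tfrac{\p}{\p x^i}\otimes\tfrac{\p}{\p x^i},~Y_{ij}]=\tfrac{\p}{\p x^j}\otimes\tfrac{\p}{\p x^i}+\tfrac{\p}{\p x^i}\otimes\tfrac{\p}{\p x^j}$ and $[\tfrac{\p}{\p x^j}\otimes\tfrac{\p}{\p x^j},~Y_{ij}]=\tfrac{\p}{\p x^i}\otimes\tfrac{\p}{\p x^j}+\tfrac{\p}{\p x^j}\otimes\tfrac{\p}{\p x^i}$, one sees these two quantities are equal, so in $[\delta_{p,q},~Y_{ij}]$ they enter with coefficients $+1$ and $-1$ respectively and cancel. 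I would present this one case in full detail and remark that the $X_{ij}$ cases are analogous but easier (handled essentially as in the definite case treated in \cite{BGR}), then conclude that $\delta_{p,q}$ spans the one-dimensional invariant space.
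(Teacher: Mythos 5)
Your proposal is correct and follows essentially the same route as the paper: invoke Lemma~\ref{dimi} (case $k=1$) to reduce to exhibiting one nonzero invariant, dispose of the $X_{ij}$ generators as in the definite case of \cite{BGR}, and verify the $Y_{ij}$ case by the same explicit cancellation $(\tfrac{\p}{\p x^j}\otimes\tfrac{\p}{\p x^i}+\tfrac{\p}{\p x^i}\otimes\tfrac{\p}{\p x^j})-(\tfrac{\p}{\p x^i}\otimes\tfrac{\p}{\p x^j}+\tfrac{\p}{\p x^j}\otimes\tfrac{\p}{\p x^i})=0$ that the paper records. (Only cosmetic remark: your displayed formula $[\tfrac{\p}{\p x^a},X_{ij}]=-\delta_{aj}\tfrac{\p}{\p x^i}+\delta_{ai}\tfrac{\p}{\p x^j}$ is off by an overall sign from the bracket of the vector fields $X_{ij}=-x_i\tfrac{\p}{\p x^j}+x_j\tfrac{\p}{\p x^i}$, but this does not affect the vanishing argument.)
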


\begin{proof} By lemma \ref{dimi}, there is only one generator.
Now set $\delta_p=\sum_{i=1}^{p}\frac{\p}{\p x^i}\otimes\frac{\p}{\p x^i}$ and $\delta_q=\sum_{i=p+1}^{n}\frac{\p}{\p x^i}\otimes\frac{\p}{\p x^i}.$ Then by \cite[lemma 4.4]{BGR}, $$[\delta_{p,q},~X_{ij}]=[\delta_p,~X_{ij}]-[\delta_q,~X_{ij}]=0~~~\mbox{for all}~1\leq i<j\leq p,~p+1\leq i<j\leq n.$$ Also it is not hard to check that for $1\leq i\leq p$ and $p+1\leq j\leq n,$ $$[\delta_{p,q},~Y_{ij}]=[\delta_p,~Y_{ij}]-[\delta_q,~Y_{ij}]=(\frac{\p}{\p x^j}\otimes\frac{\p}{\p x^i}+\frac{\p}{\p x^i}\otimes\frac{\p}{\p x^j})-(\frac{\p}{\p x^i}\otimes\frac{\p}{\p x^j}+\frac{\p}{\p x^j}\otimes\frac{\p}{\p x^i})= 0.$$ Hence $[\delta_{p,q}]^{\so(p,q)}=0.$
\end{proof}

\begin{lemma}\label{beta}
$$[\I_n\otimes\w^{n-1}(\I_n)]^{\so(p,q)}=\left\langle \beta_{p,q}\right\rangle~~~~\mbox{for}~~  n\geq 4$$ where 
$$
\begin{aligned}
\beta_{p,q}=&\sum^p_{m=1}(-1)^{m-1}\frac{\p}{\p x^m}\otimes\frac{\p}{\p x^1}\w\frac{\p}{\p x^2}\w\ldots\widehat{\frac{\p}{\p x^m}}\ldots\w\frac{\p}{\p x^p}\w\ldots\w\frac{\p}{\p x^n}\\&-\sum^n_{m=p+1}(-1)^{m-1}\frac{\p}{\p x^m}\otimes\frac{\p}{\p x^1}\w\frac{\p}{\p x^2}\w\ldots\w\frac{\p}{\p x^p}\w\ldots\widehat{\frac{\p}{\p x^m}}\ldots\w\frac{\p}{\p x^n}
\end{aligned}$$
\end{lemma}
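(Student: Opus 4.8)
The plan is to exploit Lemma \ref{dimi}, which already tells us that $[\I_n\ot\w^{n-1}(\I_n)]^{\so(p,q)}$ is one-dimensional. Thus it is enough to show that $\be_{p,q}$ is a nonzero $\so(p,q)$-invariant. Nonvanishing is immediate: abbreviating $\om_m=\frac{\p}{\p x^1}\w\ld\widehat{\frac{\p}{\p x^m}}\ld\w\frac{\p}{\p x^n}$, the vectors $\frac{\p}{\p x^m}\ot\om_m$, $1\le m\le n$, are pairwise distinct basis vectors of $\I_n\ot\w^{n-1}(\I_n)$, so no cancellation can occur in the defining expression and $\be_{p,q}\neq0$. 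It remains to verify $[\be_{p,q},g]=0$ for $g$ ranging over the generators $X_{ij}$ and $Y_{ij}$, using the action formula $[\al\ot\eta,X]=[\al,X]\ot\eta+\al\ot[\eta,X]$ together with the vector-field brackets $[\frac{\p}{\p x^a},X_{ij}]=-\delta_{ai}\frac{\p}{\p x^j}+\delta_{aj}\frac{\p}{\p x^i}$ and $[\frac{\p}{\p x^a},Y_{ij}]=\delta_{ai}\frac{\p}{\p x^j}+\delta_{aj}\frac{\p}{\p x^i}$. Write $\be_{p,q}=\be^+-\be^-$ for the two sums, over $1\le m\le p$ and $p+1\le m\le n$ respectively.

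Invariance under the $X_{ij}$ follows from block considerations. If $1\le i<j\le p$, then every $\om_m$ appearing in $\be^-$ contains the full $\so(p)$-invariant factor $\frac{\p}{\p x^1}\w\ld\w\frac{\p}{\p x^p}$ and $[\frac{\p}{\p x^m},X_{ij}]=0$, so $[\be^-,X_{ij}]=0$; and for $\be^+$ each $\om_m$ contains the full $\so(q)$-invariant factor $\frac{\p}{\p x^{p+1}}\w\ld\w\frac{\p}{\p x^n}$, while the $p$-block part of $\be^+$, with that $q$-block volume factored out, is the corresponding generator for $\so(p)$, which is invariant by \cite{BGR}; hence $[\be^+,X_{ij}]=0$. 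The case $p+1\le i<j\le n$ is identical with the roles of the two blocks interchanged, so $[\be_{p,q},X_{ij}]=0$.

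The crux is invariance under a boost $Y_{ij}$ with $1\le i\le p<j\le n$. First one observes that any summand with index $m\notin\{i,j\}$ contributes nothing: there $[\frac{\p}{\p x^m},Y_{ij}]=0$, and in $[\om_m,Y_{ij}]$ each of the two potentially nonzero replacements ($\frac{\p}{\p x^i}\mapsto\frac{\p}{\p x^j}$ and $\frac{\p}{\p x^j}\mapsto\frac{\p}{\p x^i}$) lands in a wedge that already contains the incoming factor and so vanishes. Thus the only surviving contributions come from the $m=i$ summand of $\be^+$ and the $m=j$ summand of $\be^-$. I would then expand each of these two contributions — each splitting into a tensor-slot piece (acting on $\frac{\p}{\p x^m}$) and a wedge piece (acting on $\om_m$), the latter producing a permuted wedge that must be returned to standard order, which costs a sign of the form $(-1)^{j-i-1}$ — and check that, once the $(-1)^{m-1}$ coefficients and the relative minus between $\be^+$ and $\be^-$ are folded in, the resulting four basis terms cancel in pairs. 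This sign bookkeeping is the one genuinely delicate step, and it is where the signature $(p,q)$ enters essentially, through the relative sign in the definition of $\be_{p,q}$; everything else in the lemma is a formal consequence of Lemma \ref{dimi} and the definite-case results of \cite{BGR}. One could alternatively argue more conceptually: contraction with the volume element identifies $\w^{n-1}(\I_n)$ with $\I_n^*$ $\so(p,q)$-equivariantly, since $SO(p,q)\subset SL_n$, which reduces the statement to the triviality that the invariants of $\I_n\ot\I_n^*$ are spanned by the identity element; but the explicit check above keeps the normalization of the generator transparent.
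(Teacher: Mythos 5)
Your reduction via Lemma \ref{dimi}, the nonvanishing remark, and the block treatment of the rotations $X_{ij}$ all run parallel to the paper's own proof, which likewise quotes the dimension count and the definite-case computations of \cite{BGR}. The genuine gap is exactly at the point you yourself flag as ``the one genuinely delicate step'' and then postpone: the boost verification is asserted, not performed, and when one performs it the claimed pairwise cancellation does not come out for the element with the relative minus sign. Writing $\om_m=\frac{\p}{\p x^1}\w\ld\widehat{\frac{\p}{\p x^m}}\ld\w\frac{\p}{\p x^n}$ and taking $1\le i\le p<j\le n$, the two surviving summands transform as
\[
[\tfrac{\p}{\p x^i}\ot\om_i,\,Y_{ij}]=\tfrac{\p}{\p x^j}\ot\om_i+(-1)^{j-i-1}\tfrac{\p}{\p x^i}\ot\om_j,
\qquad
[\tfrac{\p}{\p x^j}\ot\om_j,\,Y_{ij}]=\tfrac{\p}{\p x^i}\ot\om_j+(-1)^{j-i-1}\tfrac{\p}{\p x^j}\ot\om_i,
\]
so after inserting the coefficients $(-1)^{i-1}$ and $(-1)^{j-1}$ the contribution to $[\be^+-\be^-,\,Y_{ij}]$ is $2(-1)^{i-1}\frac{\p}{\p x^j}\ot\om_i+2(-1)^{j}\frac{\p}{\p x^i}\ot\om_j$, which is not zero; it is the sum $\be^++\be^-$, not the difference, that the boosts annihilate (check it on $n=4$, $i=1$, $j=3$ if you want a two-line sanity test). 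So the sign bookkeeping you deferred is not a routine fold-in of the relative minus sign; it is the step at which the argument, as you have set it up, fails to close.

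Your alternative conceptual argument already predicts this and is in fact the more reliable route: contraction with the volume element is $\so(p,q)$-equivariant precisely because $\so(p,q)\subset\mathfrak{sl}_n$, with no reference to the signature, and under the identification $\I_n\ot\w^{n-1}(\I_n)\cong\I_n\ot\I_n^*$ the identity element corresponds to $\sum_{m=1}^{n}(-1)^{m-1}\frac{\p}{\p x^m}\ot\om_m$, i.e.\ to $\be^++\be^-$ --- the same generator as in the definite case, with no relative sign between the blocks. (Contrast this with Lemma \ref{delta}, where the module is $\I_n\ot\I_n$ and the invariant really is the signature-dependent tensor $\delta_{p,q}$.) Thus the two halves of your proposal contradict each other: the Schur-lemma route shows the invariant is signature-independent, while your explicit route asserts that the signature enters ``essentially, through the relative sign in the definition of $\be_{p,q}$.'' The paper's own boost verification is equally terse (``by skew symmetry of the wedge product'') and does not display these signs, so you cannot lean on it to settle the normalization. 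To repair your writeup, carry the deferred expansion through explicitly and reconcile it with the volume-contraction argument before declaring which element spans $[\I_n\ot\w^{n-1}(\I_n)]^{\so(p,q)}$; as it stands, the proposal proves one-dimensionality but not that the displayed $\be_{p,q}$ is the generator.
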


\begin{proof}  By lemma \ref{dimi}, there is only one generator.
Set $$\beta_p=\sum^p_{m=1}(-1)^{m-1}\frac{\p}{\p x^m}\otimes\frac{\p}{\p x^1}\w\frac{\p}{\p x^2}\w\ldots\widehat{\frac{\p}{\p x^m}}\ldots\w\frac{\p}{\p x^p}\w\ldots\w\frac{\p}{\p x^n}$$ and $$\beta_q=\sum^n_{m=p+1}(-1)^{m-1}\frac{\p}{\p x^m}\otimes\frac{\p}{\p x^1}\w\frac{\p}{\p x^2}\w\ldots\w\frac{\p}{\p x^p}\w\ldots\widehat{\frac{\p}{\p x^m}}\ldots\w\frac{\p}{\p x^n}.$$
 Then by \cite[Lemma 4.5]{BGR}, $$[\beta_{p,q},~X_{ij}]=[\beta_p,~X_{ij}]-[\beta_q,~X_{ij}]=0~~~\mbox{for all}~1\leq i<j\leq p,~p+1\leq i<j\leq n.$$ Also it is not hard to check that for $1\leq i\leq p$ and $p+1\leq j\leq n,$ 
$$
\begin{aligned}
~~~~~~~~~[\beta_{p,q},~Y_{ij}]&=[\beta_p,~Y_{ij}]-[\beta_q,~Y_{ij}]\\&=[(-1)^{i-1}\frac{\p}{\p x^i}\otimes\frac{\p}{\p x^1}\w\ldots\widehat{\frac{\p}{\p x^i}}\ldots\w\frac{\p}{\p x^p}\w\ldots\w\frac{\p}{\p x^n},~Y_{ij}]\\&~~~~~~~~-[(-1)^{j-1}\frac{\p}{\p x^j}\otimes\frac{\p}{\p x^1}\w\ldots\w\frac{\p}{\p x^p}\w\ldots\widehat{\frac{\p}{\p x^j}}\ldots\w\frac{\p}{\p x^n},~Y_{ij}]\\&=0
\end{aligned}$$ by skew symmetry of the wedge product.  Hence $[\beta_{p,q}]^{\so(p,q)}=0.$
\end{proof}

\begin{lemma}\label{dimo}
$$dim[\so(p,q)\otimes\I_n^{\w k}]^{\so(p,q)}=\begin{cases} 1,& \mbox{if} ~~k=2,~n-2 \mbox{}\\0,& \mbox{else} \mbox{}\end{cases}$$

\end{lemma}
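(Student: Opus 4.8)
The plan is to follow the strategy of the proof of Lemma~\ref{dimi}: reduce the computation over $\so(p,q)$ to the same computation over the compact form $\so(n)$ by passing to complexifications, and then invoke the definite-case result of \cite{BGR}.

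First I would pin down the $\so(p,q)$-module structure on $\so(p,q)\otimes\I_n^{\w k}$. From the bracket on $\h_n\cong\I_n\oplus\so(p,q)$ recalled in Section~2, for $h,X$ lying in the subalgebra $\so(p,q)\subseteq\h_n$ one has $[h,X]_{\h_n}=[h,X]_{\so(p,q)}$; hence the action of $\so(p,q)$ induced from $\h_n$ on $\so(p,q)\otimes\I_n^{\w k}$ is the adjoint representation on the first tensor factor together with the $k$-th exterior power of the standard representation on the remaining factors. Equivalently I could invoke the canonical isomorphism of $\so(p,q)$-modules $\so(p,q)\cong\w^2(\I_n)$, under which $X_{ij},Y_{ij}\mapsto\pm\frac{\p}{\p x^i}\w\frac{\p}{\p x^j}$, and replace $\so(p,q)\otimes\I_n^{\w k}$ by $\w^2(\I_n)\otimes\w^k(\I_n)$.

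Then I would run the complexification argument exactly as in Lemma~\ref{dimi}. Since passing to invariants amounts to taking the kernel of a linear map and extension of scalars $\br\to\mathbb{C}$ is exact, $[\so(p,q)\otimes\I_n^{\w k}]^{\so(p,q)}\otimes_{\br}\mathbb{C}$ is the space of invariants in $(\so(p,q)\otimes\I_n^{\w k})\otimes_{\br}\mathbb{C}$ under $\so(p,q)\otimes_{\br}\mathbb{C}\cong\so(n,\mathbb{C})$; because the adjoint module complexifies to the adjoint module and $\I_n\otimes_{\br}\mathbb{C}\cong\mathbb{C}^n$ as the standard module, this equals $[\so(n,\mathbb{C})\otimes\w^k(\mathbb{C}^n)]^{\so(n,\mathbb{C})}$. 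The same argument applied to the compact form $\so(n)$ gives the same complexification, so
$$\dim_{\br}[\so(p,q)\otimes\I_n^{\w k}]^{\so(p,q)}=\dim_{\br}[\so(n)\otimes\I_n^{\w k}]^{\so(n)},$$
and I would finish by quoting the definite-case computation of this last dimension from \cite{BGR}, which gives $1$ for $k=2$ and $k=n-2$ and $0$ otherwise.

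I do not expect a serious obstacle, since --- just as for Lemma~\ref{dimi} --- the substance is bookkeeping; the one point I would be careful about is confirming that the module really is the adjoint representation tensored with $\w^k$ of the standard one, with no extra twist, so that the cited $\so(n)$-statement applies verbatim. If I wanted a proof independent of \cite{BGR}, I would instead compute $[\w^2(\mathbb{C}^n)\otimes\w^k(\mathbb{C}^n)]^{\so(n,\mathbb{C})}$ directly by classical invariant theory: the invariants are the complete contractions of the $k+2$ vector slots using the bilinear form alone (which forces $k=2$ and produces the ``trace'' invariant) or the bilinear form together with the volume form (which forces $k=n-2$ and produces the Hodge-star invariant). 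That contraction count is the least mechanical step, and it is also where I would need to keep in force the genericity hypothesis on $n$ that is tacit in the cited lemma --- needed, for example, to exclude the reducibility of $\w^2$ that occurs when $n=4$.
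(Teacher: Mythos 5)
Your proposal matches the paper's own proof: the paper likewise argues that $[\so(p,q)\otimes\I_n^{\w k}]^{\so(p,q)}$ and $[\so(n)\otimes\I_n^{\w k}]^{\so(n)}$ have the same complexification, hence the same dimension, and then quotes \cite[Lemma 4.2]{BGR} for the definite case. Your extra care about the module structure and the sketched invariant-theoretic alternative go beyond what the paper records, but the essential argument is the same.
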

\begin{proof}
 $[\so(p,q)\otimes\I_n^{\w k}]^{\so(p,q)}$ and $[\so(n)\otimes\I_n^{\w k}]^{\so(n)}$ have the same complexification, and thus  the same dimension. The result follows by  \cite[lemma 4.2]{BGR}.

\end{proof}
\begin{lemma}
There is a vector space isomorphism

$$[\so(p,q)\otimes\I_n^{\w 2}]^{\so(p,q)}=\left\langle \rho_{p,q} \right\rangle~~~~\mbox{for}~~  n\geq 4$$ where 
$$
\rho_{p,q}=\sum_{1\leq i<j\leq p}X_{ij}\otimes\frac{\p}{\p x^i}\w\frac{\p}{\p x^j}-\sum_{p+1\leq i<j\leq n}X_{ij}\otimes\frac{\p}{\p x^i}\w\frac{\p}{\p x^j}+\sum_{\substack{1\leq i\leq p\\ p+1\leq j\leq n}} Y_{ij}\otimes\frac{\p}{\p x^i}\w\frac{\p}{\p x^j}.
$$

\end{lemma}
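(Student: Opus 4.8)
The plan is to lean on the dimension count already established. By Lemma~\ref{dimo}, $[\so(p,q)\otimes\I_n^{\w 2}]^{\so(p,q)}$ is one-dimensional, so the whole task reduces to checking that $\rho_{p,q}$ is a nonzero invariant. Nonvanishing is immediate: the tensors $X_{ij}\otimes\frac{\p}{\p x^i}\w\frac{\p}{\p x^j}$ and $Y_{ij}\otimes\frac{\p}{\p x^i}\w\frac{\p}{\p x^j}$ appearing in $\rho_{p,q}$ are distinct members of a basis of $\so(p,q)\otimes\I_n^{\w 2}$. Thus everything comes down to verifying $[\rho_{p,q},Z]=0$ for each generator $Z$ of $\so(p,q)$, which I would carry out in the style of the proofs of Lemmas~\ref{delta} and~\ref{beta}: write $\rho_{p,q}=\rho_p-\rho_q+\rho_{pq}$ according to the three index regimes, where $\rho_p=\sum_{1\le i<j\le p}X_{ij}\otimes\frac{\p}{\p x^i}\w\frac{\p}{\p x^j}$, $\rho_q=\sum_{p+1\le i<j\le n}X_{ij}\otimes\frac{\p}{\p x^i}\w\frac{\p}{\p x^j}$, and $\rho_{pq}=\sum_{1\le i\le p,\,p+1\le j\le n}Y_{ij}\otimes\frac{\p}{\p x^i}\w\frac{\p}{\p x^j}$, and treat the generators $X_{kl}$ and $Y_{kl}$ separately.

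For $Z=X_{kl}$ with $1\le k<l\le p$ (and symmetrically for $p+1\le k<l\le n$): $X_{kl}$ acts trivially on the $\frac{\p}{\p x^i}$ with $i>p$ and has zero bracket with every $q$-block generator, so $[\rho_q,X_{kl}]=0$; the term $[\rho_p,X_{kl}]$ vanishes by the corresponding computation in the definite case \cite{BGR}; and $[\rho_{pq},X_{kl}]=0$ by expanding $[Y_{ij}\otimes\frac{\p}{\p x^i}\w\frac{\p}{\p x^j},X_{kl}]$ using $[Y_{ij},X_{kl}]$ together with $[\frac{\p}{\p x^m},X_{kl}]=\delta_{ml}\frac{\p}{\p x^k}-\delta_{mk}\frac{\p}{\p x^l}$ and collecting terms.

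The main case is $Z=Y_{kl}$ with $1\le k\le p$ and $p+1\le l\le n$. Here none of $\rho_p$, $\rho_q$, $\rho_{pq}$ is individually invariant; I would compute each of $[\rho_p,Y_{kl}]$, $[\rho_q,Y_{kl}]$, $[\rho_{pq},Y_{kl}]$ using the structure relations $[X_{ij},Y_{kl}]$ and $[Y_{ij},Y_{kl}]$ of $\so(p,q)$, each re-expressed in the $X$'s and $Y$'s, together with $[\frac{\p}{\p x^m},Y_{kl}]=\delta_{mk}\frac{\p}{\p x^l}+\delta_{ml}\frac{\p}{\p x^k}$, and then check that $[\rho_p,Y_{kl}]-[\rho_q,Y_{kl}]+[\rho_{pq},Y_{kl}]$ cancels. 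The mechanism to display is that the adjoint contributions from $[X_{ij},Y_{kl}]$ inside $\rho_p$ and $\rho_q$ are killed by the wedge contributions $Y_{ij}\otimes[\frac{\p}{\p x^i},Y_{kl}]\w\frac{\p}{\p x^j}$ (and the symmetric one) of $\rho_{pq}$, while the adjoint contributions from $[Y_{ij},Y_{kl}]$ inside $\rho_{pq}$ are killed by the wedge contributions of $\rho_p$ and $\rho_q$; the minus sign in front of $\rho_q$ and the signs forced by the $I_{p,q}$-splitting are exactly what make these pairings balance.

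I expect this last verification to be the sole real obstacle, the difficulty being bookkeeping of signs from the block decomposition and from skew-symmetry of the wedge. To keep it under control I would first use $\so(p)\times\so(q)$-equivariance to reduce to one convenient boost, say $Y_{1n}$, and then organize the check by the monomial $Z\otimes\frac{\p}{\p x^a}\w\frac{\p}{\p x^b}$, showing its total coefficient in $[\rho_{p,q},Y_{1n}]$ vanishes, with the generic case $Z=X_{ab}$ or $Z=Y_{ab}$ and the degenerate cases where $\{a,b\}$ meets $\{1,n\}$ handled separately. As an alternative route one could identify $\rho_{p,q}$ with the image of the Casimir element of $\so(p,q)$ (for the trace form) under the $\so(p,q)$-equivariant isomorphism $\so(p,q)\cong\w^2\I_n$ induced by $I_{p,q}$, which would make invariance automatic; but the direct computation is in the spirit of the preceding lemmas and is the approach I would present.
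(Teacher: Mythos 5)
Your argument is correct, but it runs in the opposite direction from the paper's. You take from Lemma~\ref{dimo} only the bound $\dim[\so(p,q)\otimes\I_n^{\w 2}]^{\so(p,q)}\le 1$ and then verify directly that the explicit element $\rho_{p,q}$ is a nonzero invariant; the cancellation mechanism you describe is the right one (the adjoint contributions $[X_{ij},Y_{kl}]$ coming from $\rho_p$ and $\rho_q$ cancel against the wedge contributions of $\rho_{pq}$, and conversely), and your Casimir observation --- that $\rho_{p,q}$ is the image of the invariant Casimir tensor under the $\so(p,q)$-equivariant identification $\w^2\I_n\cong\so(p,q)$ furnished by the form of signature $(p,q)$ --- makes the invariance essentially computation-free. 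The paper never checks invariance of $\rho_{p,q}$ at all: after citing Lemma~\ref{dimo} for uniqueness of the generator, it identifies the generator by induction on $q$, with base case $n=4$ checked by hand on $\so(3,1)$ and $\so(2,2)$; an arbitrary invariant $\omega$ is split as $u+v$ relative to $\so(p,q-1)\otimes\I_{n-1}^{\w 2}$, invariance under $\so(p,q-1)$ forces $u=c\rho_{p,q-1}$ and kills part of $v$, and applying the particular generators $X_{p+1\,n}$ and $X_{p+2\,n}$ pins down the remaining coefficients, giving $\omega=c\rho_{p,q}$. Your route is shorter and self-contained (no base case, and only the upper bound from Lemma~\ref{dimo} is used); the paper's induction buys an explicit derivation of the generator's form and sets up the template reused for $\gamma_{p,q}$ in the next lemma. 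One small point: your reduction to the single boost $Y_{1n}$ needs the remark that, since $\rho_{p,q}$ is already $\so(p)\oplus\so(q)$-invariant, the kernel of $Z\mapsto[\rho_{p,q},Z]$ restricted to the span of the $Y_{ij}$ is an $\so(p)\oplus\so(q)$-submodule, and the submodule generated by $Y_{1n}$ is the whole span (which need not be irreducible, e.g.\ when $p=q=2$), so checking $Y_{1n}$ alone indeed suffices.
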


\begin{proof} By lemma \ref{dimo}, there is only one generator.
 It is easy to check the result for  $n=4$ by direct calculation on $\so(3,1)$ and $\so(2,2).$  Suppose the result true for  $\so(p,q-1).$ Note that $[\so(p,q)]^{\so(p,q)}=\{0\}.$ Now let $B_{p,q-1}$ be the vector space basis of $\so(p,q-1).$  It is clear that the vector space basis of $\so(p,q)$ is  $$B_{p,q}=B_{p,q-1}\bigcup\{X_{in},~i=p+1,\ldots,n-1\}\bigcup\{Y_{in},~i=1,\ldots,p\}.$$  Set $S=\{x^1,x^2,\ldots,x^n\}$ and $S'=\{x^1,x^2,\ldots,x^{n-1}\}.$ A vector space basis of $$(\so(p,q)\otimes\I_n^{\w 2})/(\so(p,q-1)\otimes\I_{n-1}^{\w 2})$$ is given by the families of elements: \\$1)~~e\otimes\frac{\p}{\p x^n}\w\frac{\p}{\p z},~~e\in B_{p,q-1},~z\in S'$\\ $2)~~X_{in}\otimes\frac{\p}{\p z^1}\w\frac{\p}{\p z^2},~~z^j\in S,~p+1\leq i\leq n-1$ \\  $3)~~Y_{in}\otimes\frac{\p}{\p z^1}\w\frac{\p}{\p z^2},~~z^j\in S,~1\leq i\leq p.$\\ 
 Now let  $\omega\in [\so(p,q)\otimes\I_n^{\w 2}]^{\so(p,q)}$ with $\omega=u+v$ where $$u\in (\so(p,q-1)\otimes\I_{n-1}^{\w 2}),~~ v\in (\so(p,q)\otimes\I_{n}^{\w 2})/(\so(p,q-1)\otimes\I_{n-1}^{\w 2}).$$ 
Write $v=S_1+S_2+S_3$ with $$S_1=\sum_{z\in S'}c_{1,*}~e\otimes\frac{\p}{\p x^n}\w\frac{\p}{\p z},$$  $$S_2=\sum_{\substack{z^1, z^{2}\in S'\\ p+1\leq i\leq n-1}}c_{2,*}~(X_{in})\otimes\frac{\p}{\p z^1}\w\frac{\p}{\p z^2}+\sum_{\substack{z\in S'\\ p+1\leq i\leq n-1}}c^i_{4,*}~(X_{in})\otimes\frac{\p}{\p z}\w\frac{\p}{\p x^n}$$ and $$S_3=\sum_{\substack{z^1, z^{2}\in S'\\ 1\leq i\leq p}}c_{3,*}~(Y_{in})\otimes\frac{\p}{\p z^1}\w\frac{\p}{\p z^2}+\sum_{\substack{z\in S'\\ 1\leq i\leq p}}c^i_{5,*}~(Y_{in})\otimes\frac{\p}{\p z}\w\frac{\p}{\p x^n}.$$
 For all $\x\in\so(p,q-1)\subseteq\so(p,q),$ as a Lie subalgebra, we have $$0=[\x,~\omega]=[\x,~u]+[\x,~v].$$ This implies that  $[\x,~u]$ and $[\x,~v]$ are zero; otherwise both are non-zero and thus not linearly independent; a contradiction since  $[\x,~v]$ contains at least one term involving the vector field $\frac{\p}{\p x^n}$ in its expression whereas $[\x,~u]$ doesn't. So $u\in [\so(p,q-1)\otimes\I_{n-1}^{\w 2}]^{\so(p,q-1)}$ i.e.  $~u=c\rho_{p,q-1}$ for some constant $c\in\br.$  Since $0=[\x,~v]=[\x,~S_1+~S_2+~S_3]=[\x,~S_1]+[\x,~S_2]+[\x,~S_3],$

we must have $[\x,~S_1]=[\x,~S_2]=[\x,~S_3]=0$ by linear independence.  So $$0=[\x,~S_1]=-\sum_{z\in S'}c_{1,*}~[\x,e\otimes\frac{\p}{\p z}]\w \frac{\p}{\p x^n},$$ thus $$\sum_{z\in S'}c_{1,*}~[\x,e\otimes\frac{\p}{\p z}]=0$$ and thus   $~\sum_{z\in S'}c_{1,*}~e\otimes\frac{\p}{\p z}\in[\so(p,q-1)\otimes\I_{n-1}]^{\so(p,q-1)}=\{0\}.~$ It follows that $S_1=0.$
 
In particular for $\x=X_{p+1n}:=-x_{p+1}\frac{\p}{\p x^n}+x_n\frac{\p}{\p x^{p+1}}\in\so(p,q);$ we have 
$$
\begin{aligned}
0=[\x,~\omega]=&[\x,~u]+[\x,~S_2]+[\x,~S_3]=\\&\sum_{p+2\leq j\leq n-1}c~X_{p+1 j}\otimes\frac{\p}{\p x^j}\w\frac{\p}{\p x^n}-\sum_{p+2\leq j\leq n-1}c~X_{nj}\otimes\frac{\p}{\p x^{p+1}}\w\frac{\p}{\p x^j}\\&+\sum_{1\leq i\leq p}c~Y_{in}\otimes\frac{\p}{\p x^i}\w\frac{\p}{\p x^{p+1}}+\sum_{1\leq i\leq p}c~Y_{i p+1}\otimes\frac{\p}{\p x^{i}}\w\frac{\p}{\p x^n}\\&+\sum_{\substack{z^1, z^{2}\in S\\ p+2\leq i\leq n-1}}c_{2,*}~X_{p+1 i}\otimes\frac{\p}{\p z^1}\w\frac{\p}{\p z^2}+\sum_{\substack{z^1, z^{2}\in S\\ p+1\leq i\leq n-1}}c_{2,*}~X_{in}\otimes[X_{p+1 n},~\frac{\p}{\p z^1}\w\frac{\p}{\p z^2}]\\&-\sum_{\substack{z^1, z^{2}\in S\\ 1\leq i\leq p}}c_{3,*}~Y_{p+1 i}\otimes\frac{\p}{\p z^1}\w\frac{\p}{\p z^2}+\sum_{\substack{z^1, z^{2}\in S\\ 1\leq i\leq p}}c_{3,*}~Y_{in}\otimes[X_{p+1 n},~\frac{\p}{\p z^1}\w\frac{\p}{\p z^2}]\\&+\sum_{\substack{z\in S'\\ p+2\leq i\leq n-1}}c^i_{4,*}~X_{p+1 i}\otimes\frac{\p}{\p z}\w\frac{\p}{\p x^n}+\sum_{\substack{z\in S'\\ p+1\leq i\leq n-1}}c^i_{4,*}~X_{in}\otimes[X_{p+1 n},~\frac{\p}{\p z}\w\frac{\p}{\p x^n}]\\&-\sum_{\substack{z\in S'\\ 1\leq i\leq p}}c^i_{5,*}~Y_{i p+1}\otimes\frac{\p}{\p z}\w\frac{\p}{\p x^n}+\sum_{\substack{z\in S'\\ 1\leq i\leq p}}c^i_{5,*}~Y_{in}\otimes[X_{p+1 n},~\frac{\p}{\p z}\w\frac{\p}{\p x^n}]. 
\end{aligned}
$$
Clearly, all the basis vectors except $X_{p+1 j}\otimes\frac{\p}{\p x^j}\w\frac{\p}{\p x^n},$ $X_{nj}\otimes\frac{\p}{\p x^{p+1}}\w\frac{\p}{\p x^j},$ $Y_{in}\otimes\frac{\p}{\p x^i}\w\frac{\p}{\p x^{p+1}},$ and $Y_{i p+1}\otimes\frac{\p}{\p x^{i}}\w\frac{\p}{\p x^n}$ with $p+2\leq j\leq n-1,~1\leq i\leq p$ appear only once in the summation above. So all the coefficients $c_{2,*},~c_{3,*},~c^i_{4,*},~c^i_{5,*}$  are zero except  $c^i_{4,i}$  and $c^j_{5,j}$with $i\neq p+1$ which satisfy $c^i_{4,i}+c=0$ and $c^j_{5,j}-c=0.$   So $$\omega=c\rho_{p,q-1}-c\sum_{ p+1\leq i\leq n-1} X_{in}\otimes\frac{\p}{\p x^i}\w\frac{\p}{\p x^n}+c\sum_{1\leq i\leq p} X_{in}\otimes\frac{\p}{\p x^i}\w\frac{\p}{\p x^n}+c^{p+1}_{4,p+1 }X_{p+1 n}\otimes\frac{\p}{\p x^{p+1}}\w\frac{\p}{\p x^n}.$$ To finish, we apply  $\x=X_{p+2 n}:=-x_{p+2}\frac{\p}{\p x^n}+x_n\frac{\p}{\p x^{p+2}}$ to the condition $[\x,\omega]=0$ to have $c^{p+1}_{4,p+1}=-c.$ Hence $\omega=c\rho_{p,q}.$


\end{proof}

\begin{lemma}

$$[\so(p,q)\otimes\I_n^{\w n-2}]^{\so(p,q)}=\left\langle \gamma_{p,q} \right\rangle~~~~\mbox{for}~~  n\geq 4$$ where 
$$
\begin{aligned}
\gamma_{p,q}=&\sum_{1\leq i<j\leq p} (-1)^{i+j+1}X_{ij}\otimes\frac{\p}{\p x^1}\w\ldots\widehat{\frac{\p}{\p x^i}}\ldots\widehat{\frac{\p}{\p x^j}}\ldots\w\frac{\p}{\p x^n}\\&-\sum_{p+1\leq i<j\leq n}(-1)^{i+j+p+1}X_{ij}\otimes\frac{\p}{\p x^1}\w\ldots\widehat{\frac{\p}{\p x^i}}\ldots\widehat{\frac{\p}{\p x^j}}\ldots\w\frac{\p}{\p x^n}\\&+\sum_{\substack{1\leq i\leq p\\ p+1\leq j\leq n}}(-1)^{i+j} Y_{ij}\otimes\frac{\p}{\p x^1}\w\ldots\widehat{\frac{\p}{\p x^i}}\ldots\widehat{\frac{\p}{\p x^j}}\ldots\w\frac{\p}{\p x^n}.
\end{aligned}
$$

\end{lemma}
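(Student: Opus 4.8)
The plan is to follow the same inductive strategy just used for $\rho_{p,q}$, only now tracking the complementary degree $n-2$. By Lemma \ref{dimo} the invariant space $[\so(p,q)\otimes\I_n^{\w n-2}]^{\so(p,q)}$ is one-dimensional, so it suffices to exhibit one nonzero invariant; the bulk of the work is checking $[\gamma_{p,q},X]=0$ for every generator $X$ of $\so(p,q)$, splitting into the three cases $X=X_{kl}$ with both indices in $\{1,\dots,p\}$, both in $\{p+1,\dots,n\}$, and $X=Y_{kl}$ a boost. First I would recall from \cite[Lemma 4.6]{BGR} (the definite-case analogue) that the ``compact'' piece $\gamma_p^{(c)}:=\sum_{1\le i<j\le p}(-1)^{i+j+1}X_{ij}\otimes(\p/\p x^1\w\cdots\widehat{\p/\p x^i}\cdots\widehat{\p/\p x^j}\cdots\w\p/\p x^n)$ is already $\so(p)$-invariant when regarded inside the larger wedge, and likewise for the $\so(q)$ piece; this reduces the verification for $X=X_{kl}$ with $k,l\le p$ or $k,l>p$ to a bookkeeping check that the extra wedge factors $\p/\p x^{p+1}\w\cdots\w\p/\p x^n$ (resp. $\p/\p x^1\w\cdots\w\p/\p x^p$) are untouched by the bracket, together with the sign convention $(-1)^{i+j+1}$ versus $(-1)^{i+j+p+1}$ chosen precisely so the two blocks transform compatibly.

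The genuinely new case is $X=Y_{kl}$ with $1\le k\le p<l\le n$. Here I would compute $[\gamma_{p,q},Y_{kl}]$ by expanding the bracket over the three sums defining $\gamma_{p,q}$, using $[X_{ij},Y_{kl}]$ and $[Y_{ij},Y_{kl}]$ on the coefficient Lie-algebra part and the extended action $[\al_1\w\cdots\w\al_{n-2},Y_{kl}]=\sum_r \al_1\w\cdots\w[\al_r,Y_{kl}]\w\cdots$ on the wedge part. Each term $[X_{ij},Y_{kl}]$ produces either another $Y$-type generator or $0$ depending on index overlap, and each $[\,\p/\p x^a,Y_{kl}\,]$ equals $\p/\p x^l$ if $a=k$, $\p/\p x^k$ if $a=l$, and $0$ otherwise (up to sign), so the whole expression collapses to a sum indexed by which of $k,l$ currently sits in the omitted pair $\{i,j\}$. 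I expect — as in the $\rho_{p,q}$ argument — that the terms pair off: a contribution from differentiating a wedge factor cancels against a contribution from bracketing the coefficient, and the carefully inserted signs $(-1)^{i+j}$ on the boost block are exactly what makes the cancellation work. This sign/cancellation accounting is the main obstacle; it is not deep but it is the step where an error would most easily creep in, and it is why the statement restricts to $n\ge 4$ (so that the omitted pair leaves at least two surviving wedge factors and the patterns stabilize).

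Alternatively, and more cleanly, I would prefer to prove the lemma by the induction-on-$q$ device exactly as written for $\rho_{p,q}$: assume the result for $\so(p,q-1)$, write the basis of $(\so(p,q)\otimes\I_n^{\w n-2})/(\so(p,q-1)\otimes\I_{n-1}^{\w n-2})$ in families analogous to $(1)$–$(3)$ above, decompose an arbitrary invariant $\omega=u+v$, use that brackets by $\x\in\so(p,q-1)$ separate the $\p/\p x^n$-free part $u$ from the $\p/\p x^n$-containing part $v$ (the same ``linear independence'' trick, since terms with and without $\p/\p x^n$ cannot cancel), conclude $u=c\,\gamma_{p,q-1}$ by the inductive hypothesis and $[\so(p,q-1)\otimes\I_{n-1}^{\w n-3}]^{\so(p,q-1)}=0$ from Lemma \ref{dimo}, and then pin down $v$ by applying the specific boosts $X_{p+1\,n}$ and $X_{p+2\,n}$ and $Y_{kn}$ and matching coefficients of the basis vectors that appear exactly once. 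The base case $n=4$ (so $(p,q)\in\{(3,1),(2,2)\}$) is a finite direct computation. The obstacle in this route is again purely combinatorial: organizing the (larger, since $n-2$ can be big) list of basis families and confirming that, after imposing $[\x,\omega]=0$ for enough $\x$, the only surviving freedom is the single constant $c$ with $\omega=c\,\gamma_{p,q}$; I would lean on \cite[Lemma 4.6]{BGR} to supply the corresponding identity for the definite pieces and keep the new content confined to how the two blocks and the boost terms interact across the $x^n$ coordinate.
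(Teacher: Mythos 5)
Your first route is sound and is essentially what the paper's (very terse) proof amounts to: quote Lemma \ref{dimo} to get that the invariant space is one--dimensional, then identify the generator; the paper discharges the identification by pointing to the definite--case computation in [BGR] and to ``the path of the proof of the previous lemma,'' while you propose to verify directly that $\gamma_{p,q}$ is a nonzero invariant under the $X_{kl}$ and the boosts $Y_{kl}$. That direct check (wedge--factor contributions cancelling against coefficient--bracket contributions, with the signs $(-1)^{i+j+1}$, $(-1)^{i+j+p+1}$, $(-1)^{i+j}$ doing the bookkeeping) is a legitimate and, if anything, more self--contained way to finish, and it is logically complete once the cancellation is actually carried out.

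The route you say you would prefer, however --- the induction on $q$ ``exactly as written for $\rho_{p,q}$'' --- does not transfer verbatim, and your description of it contains a concrete error. For $k=2$ the wedge degree does not move with $n$, so the $\p/\p x^n$--free component $u$ lies in $[\so(p,q-1)\otimes\I_{n-1}^{\w 2}]^{\so(p,q-1)}=\left\langle\rho_{p,q-1}\right\rangle$ and the step $u=c\,\rho_{p,q-1}$ makes sense. For $k=n-2$ the degree is tied to $n$: the $\p/\p x^n$--free component lies in $[\so(p,q-1)\otimes\I_{n-1}^{\w n-2}]^{\so(p,q-1)}$, and by Lemma \ref{dimo} applied to the pair $(p,q-1)$ (ambient dimension $n-1$) this space vanishes for $n\geq 5$, since $n-2\notin\{2,\,(n-1)-2\}$. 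The inductive generator $\gamma_{p,q-1}$ lives in wedge degree $n-3$, not $n-2$, so ``conclude $u=c\,\gamma_{p,q-1}$'' is a degree mismatch, and the vanishing you invoke, $[\so(p,q-1)\otimes\I_{n-1}^{\w n-3}]^{\so(p,q-1)}=0$, is exactly backwards: that space is $\left\langle\gamma_{p,q-1}\right\rangle$. The correct conclusion is $u=0$ --- consistent with the fact that every monomial of $\gamma_{p,q}$ involves either $\p/\p x^n$ in the wedge part or a coefficient $X_{in}$ or $Y_{in}$ --- after which the whole analysis happens inside your families $(1)$--$(3)$ and the inductive hypothesis enters in a different way (or not at all), with $n=4$ still handled as a separate base case. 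So if you take your second route the bookkeeping must be redone; your first route avoids the issue entirely.
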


\begin{proof} By lemma \ref{dimi}, there is only one generator and the rest of 
the proof is similar to the case $k=n-2$ in the proof of \cite[Lemma 4.2]{BGR} and follows the path of the proof of the previous lemma.
\end{proof}

\section{The Leibniz Homology of $\h_n$}
Recall that for the Leibniz algebra $\h,$ the Leibniz homology of $\h$ with coefficients in $\br$ denoted $HL_*(\h,\br),$ is the homology of the Loday complex $T^*(\h),$ namely $$k\stackrel{0}{\longleftarrow}\h\stackrel{[~,~]}{\longleftarrow}\h^{\otimes^2}\stackrel{d}{\longleftarrow}\ldots\stackrel{d}{\longleftarrow}\h^{\otimes^{n-1}}\stackrel{d}{\longleftarrow}\h^{\otimes^n}\leftarrow\ldots$$ where $\h^{\otimes^n}$ is the $n$th tensor power of $\h$ over $\br,$ and where
 $$
\begin{aligned} d(h_1\otimes h_2\otimes\ldots\otimes h_n)=&\\ &\sum_{1\leq i<j\leq n}(-1)^{j}h_1\otimes h_2\otimes\ldots\otimes h_{i-1}\otimes [h_i,h_j]\otimes h_{i+1}\otimes\ldots \widehat{h_j}\ldots\otimes h_n~~~~~ \cite{L}.
\end{aligned}$$ 

\begin{remark}
The antisymmetrization of the invariants  $\delta$  and $\beta$ determined in lemma \ref{delta} and lemma \ref{beta} are  cancelled in the Pirashvili spectral sequence  and  $\rho$ is not a cycle in the Leibniz complex (see the proof of \cite[theorem 5.2]{BGR}).
\end{remark}

\begin{lemma}\label{gamma}
 Let $\tilde{\gamma}_{p,q}=\bar{\gamma}_{p,q} + \bar{\gamma}'_{p,q}$ with  
 $$
\begin{aligned}\bar{\gamma}_{p,q}=&\frac{1}{n!}\big(~\sum_{\substack{1\leq i<j\leq p, \\ \sigma\in S_{n-2}}}(-1)^{i+j+1}sgn(\sigma) X_{ij}\otimes\frac{\p}{\p x^{\sigma(1)}}\otimes\ldots\widehat{\frac{\p}{\p x^{\sigma(i)}}}\ldots\widehat{\frac{\p}{\p x^{\sigma(j)}}}\ldots\otimes\frac{\p}{\p x^{\sigma(n)}}\\ &- \sum_{\substack{p+1\leq i<j\leq n\\ \sigma\in S_{n-2}}} (-1)^{i+j+p+1}sgn(\sigma) X_{ij}\otimes\frac{\p}{\p x^{\sigma(1)}}\otimes\ldots\widehat{\frac{\p}{\p x^{\sigma(i)}}}\ldots\widehat{\frac{\p}{\p x^{\sigma(j)}}}\ldots\otimes\frac{\p}{\p x^{\sigma(n)}}\\ &+ \sum_{\substack{1\leq i\leq p\\ p+1\leq j\leq n\\ \sigma\in S_{n-2}}} (-1)^{i+j}sgn(\sigma)Y_{ij}\otimes\frac{\p}{\p x^{\sigma(1)}}\otimes\ldots\widehat{\frac{\p}{\p x^{\sigma(i)}}}\ldots\widehat{\frac{\p}{\p x^{\sigma(j)}}}\ldots\otimes\frac{\p}{\p x^{\sigma(n)}}~\big)
\end{aligned}
$$   and

 $$
\begin{aligned}\bar{\gamma}'_{p,q}=&\frac{1}{n!}\big(~\sum_{\substack{1\leq i<j\leq p, \\ \sigma\in S_{n-2}}}(-1)^{i+j+1}sgn(\sigma)\frac{\p}{\p x^{\sigma(1)}}\otimes\ldots\widehat{\frac{\p}{\p x^{\sigma(i)}}}\ldots\widehat{\frac{\p}{\p x^{\sigma(j)}}}\ldots\otimes\frac{\p}{\p x^{\sigma(n)}}\otimes X_{ij}\\ &- \sum_{\substack{p+1\leq i<j\leq n\\ \sigma\in S_{n-2}}} (-1)^{i+j+p+1}sgn(\sigma) \frac{\p}{\p x^{\sigma(1)}}\otimes\ldots\widehat{\frac{\p}{\p x^{\sigma(i)}}}\ldots\widehat{\frac{\p}{\p x^{\sigma(j)}}}\ldots\otimes\frac{\p}{\p x^{\sigma(n)}}\otimes X_{ij}\\ &+ \sum_{\substack{1\leq i\leq p\\ p+1\leq j\leq n\\ \sigma\in S_{n-2}}} (-1)^{i+j}sgn(\sigma)\frac{\p}{\p x^{\sigma(1)}}\otimes\ldots\widehat{\frac{\p}{\p x^{\sigma(i)}}}\ldots\widehat{\frac{\p}{\p x^{\sigma(j)}}}\ldots\otimes\frac{\p}{\p x^{\sigma(n)}}\otimes Y_{ij}~\big).
\end{aligned}
$$  Then 
\begin{itemize}
	\item 
$\tilde{\gamma}_n$ is an $\h_n$- invariant 

\item 
$\pi_3^*([\tilde{\gamma}_n])=\pi^*_3([\bar{\gamma}_n])=[\gamma_n]$ in $H^{Lie}_{n-2}(\h_n;~\h_n)$ where $\pi_3: \h_{n-1}^{\otimes n}\longrightarrow\h_n\otimes\h^{\w n-2}$ is the projection and $\h^{\w n-2}$ is the $(n-2) ^{th}$ wedge product over $\br.$   More precisely, $\bar{\gamma}_{p,q}$ is a cycle in $\h^{\otimes (n-1)}$ homologous to the $\h_n$-invariant $\tilde{\gamma}_{p,q}.$ Thus $$K_k:=Ker[H^{Lie}_k(\I_n; ~\h_n)^{\so(p,q)}\rightarrow H^{Lie}_{k+1}(\h_n)]=\begin{cases} \left\langle \tilde{\gamma}_{p,q}\right\rangle,& \mbox{if} ~~k=n-2, \mbox{}\\0,& \mbox{else}. \mbox{}\end{cases}$$
\end{itemize}
\end {lemma}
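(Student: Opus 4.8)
The plan is to verify the two bulleted claims in turn, relying on the invariant-theoretic computations of Section 3 together with the structure of the Pirashvili spectral sequence used in \cite{BGR, JL}. For the first bullet, I would show directly that $\tilde{\gamma}_{p,q}$ is killed by the $\so(p,q)$-action extended to $\h_n^{\otimes(n-1)}$ and, more strongly, by the full $\h_n$-action. The key point is that $\bar{\gamma}_{p,q}$ is, up to the normalizing factor $\frac{1}{n!}$ and a full antisymmetrization over $S_{n-2}$, exactly the image of $\gamma_{p,q}\in[\so(p,q)\otimes\I_n^{\wedge(n-2)}]^{\so(p,q)}$ under the averaging/shuffle map from wedge powers to tensor powers; since $\gamma_{p,q}$ is $\so(p,q)$-invariant by the last lemma of Section 3, so is $\bar\gamma_{p,q}$, and $\bar\gamma'_{p,q}$ is its ``reversed'' version, hence also invariant. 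To upgrade $\so(p,q)$-invariance to $\h_n$-invariance one must also check invariance under the abelian ideal $\I_n$: here one uses the bracket formula $[(m_1,x_1),(m_2,x_2)]=([m_1,x_2]+[x_1,m_2],[x_1,x_2])$, so the action of $\frac{\p}{\p x^k}\in\I_n$ on an element of $\h_n^{\otimes(n-1)}$ only produces terms where one $X_{ij}$ or $Y_{ij}$ tensor factor is replaced by $[\,\frac{\p}{\p x^k},X_{ij}\,]$ or $[\,\frac{\p}{\p x^k},Y_{ij}\,]\in\I_n$; these brackets are $\pm\frac{\p}{\p x^i}$ or $\pm\frac{\p}{\p x^j}$, and the resulting terms cancel against each other after re-indexing precisely because the coefficients $(-1)^{i+j+1}$, $-(-1)^{i+j+p+1}$, $(-1)^{i+j}$ were chosen to make $\gamma_{p,q}$ (and hence $\bar\gamma_{p,q}$) a ``divergence-free'' tensor — this is the same cancellation that makes $\beta_{p,q}$ and $\gamma_{p,q}$ invariant in Section 3, now carried out in the tensor (rather than wedge) setting.

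For the second bullet, I would first recall the Pirashvili spectral sequence $E^2_{pq}=H^{Lie}_p(\I_n;\br)\otimes HL_q(\so(p,q);\ldots)$-type setup (as in \cite{JL}, \cite[Thm 5.2]{BGR}) converging to $HL_*(\h_n)$, together with the Hochschild--Serre spectral sequence for the extension $0\to\I_n\to\h_n\to\so(p,q)\to0$; the relevant input is that $H^{Lie}_k(\I_n;\h_n)^{\so(p,q)}$ is computed from the invariant modules of Section 3, and by Lemmas \ref{dimi} and \ref{dimo} and their refinements the only surviving classes in the appropriate range are built from $\delta_{p,q}$, $\beta_{p,q}$, $\rho_{p,q}$ and $\gamma_{p,q}$. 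The Remark before the lemma already disposes of $\delta$, $\beta$ (antisymmetrizations cancel in the Pirashvili spectral sequence) and $\rho$ (not a cycle in the Leibniz complex), so the only potential contribution to the kernel $K_k$ comes from $\gamma_{p,q}$ in degree $k=n-2$. To finish I would (i) check that $\bar\gamma_{p,q}$ is a genuine cycle in $\h_n^{\otimes(n-1)}$ under the Loday differential $d$ — a direct computation using $d$ and the bracket, where the terms $[X_{ij},\frac{\p}{\p x^a}]$ and $[Y_{ij},\frac{\p}{\p x^a}]$ regroup to zero by the same sign bookkeeping as above; (ii) identify $\pi_3^*([\bar\gamma_{p,q}])$ with $[\gamma_{p,q}]$ in $H^{Lie}_{n-2}(\h_n;\h_n)$ by noting that $\pi_3$ collapses the tensor factors to a wedge and the shuffle-averaging in $\bar\gamma_{p,q}$ maps to the single wedge term $\gamma_{p,q}$ (the $\frac{1}{n!}$ exactly accounts for the number of shuffles); and (iii) observe that $\bar\gamma'_{p,q}$, which moves the $\g$-factor to the last tensor slot, is identified under the boundary map with $\bar\gamma_{p,q}$ up to sign, so that $\tilde\gamma_{p,q}=\bar\gamma_{p,q}+\bar\gamma'_{p,q}$ represents the $\h_n$-invariant class while $\bar\gamma_{p,q}$ alone represents the same homology class, giving $\bar\gamma_{p,q}$ homologous to $\tilde\gamma_{p,q}$. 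Combining, $K_{n-2}=\langle\tilde\gamma_{p,q}\rangle$ and $K_k=0$ otherwise.

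I expect the main obstacle to be step (i)--(ii) of the second bullet: showing that $\bar\gamma_{p,q}$ is exactly a cycle (not merely a cycle up to boundary) in $\h_n^{\otimes(n-1)}$ and that its image under $\pi_3^*$ is precisely the class $[\gamma_{p,q}]$ with the correct sign normalization. This is where the three families of signs $(-1)^{i+j+1}$, $-(-1)^{i+j+p+1}$, $(-1)^{i+j}$ — which encode the splitting $\so(p,q)\cong\g\oplus\l$ and the indefinite signature — must interact correctly with the Loday differential's signs $(-1)^j$ and with the deletion of the $i$-th and $j$-th hatted factors under all $\sigma\in S_{n-2}$. The definite-signature template from \cite[Lemma 4.2, Theorem 5.2]{BGR} provides the skeleton, but the bookkeeping of the extra minus signs on the $\l$-summands and on the ``$q$-block'' of $X_{ij}$'s requires care; I would handle it by reducing to the two elementary brackets $[X_{ij},\partial_a]$ and $[Y_{ij},\partial_a]$ and checking cancellation within each $\sigma$-orbit, then summing over $S_{n-2}$. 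The remaining ranges $k\neq n-2$ follow formally from the dimension counts in Lemmas \ref{dimi}, \ref{dimo} and the Remark, so they are not an obstacle.
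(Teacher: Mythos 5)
There is a genuine gap in your argument for the first bullet. You claim that when $\frac{\p}{\p x^k}\in\I_n$ acts, the terms in which $X_{ij}$ or $Y_{ij}$ is replaced by $[\frac{\p}{\p x^k},X_{ij}]$ or $[\frac{\p}{\p x^k},Y_{ij}]$ ``cancel against each other'' because the signs make $\gamma_{p,q}$, and hence $\bar\gamma_{p,q}$, divergence-free. That is false: $\bar\gamma_{p,q}$ alone is \emph{not} $\I_n$-invariant. Take $k=1$ and use $[\frac{\p}{\p x^1},X_{1j}]=-\frac{\p}{\p x^j}$ ($2\le j\le p$) and $[\frac{\p}{\p x^1},Y_{1j}]=\frac{\p}{\p x^j}$ ($p+1\le j\le n$); together with the coefficients $(-1)^{1+j+1}$ and $(-1)^{1+j}$ the contributions all acquire the \emph{same} sign pattern $(-1)^{j+1}\frac{\p}{\p x^j}\otimes(\cdots)$, with distinct first factors $\frac{\p}{\p x^j}$, so they reinforce rather than cancel and produce a nonzero totally antisymmetric element of $\I_n^{\otimes(n-1)}$. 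The cancellation you point to from Section 3 is the $\so(p,q)$-invariance of $\gamma_{p,q}$ (brackets with $X_{ij},Y_{ij}$), which is a different mechanism from invariance under the abelian ideal. The actual reason $\tilde\gamma_{p,q}$ is $\h_n$-invariant — and the whole reason $\bar\gamma'_{p,q}$ is introduced — is the chain-level identity $[\frac{\p}{\p x^i},\bar\gamma_{p,q}]=[\frac{\p}{\p x^i},\bar\gamma'_{p,q}]$ for all $i$, so that the two halves cancel each other in the appropriate signed combination (this is exactly the one-line mechanism the paper uses, and it is also where the sign discrepancy between the introduction's $\bar\gamma_{p,q}-\bar\gamma'_{p,q}$ and the lemma's $\bar\gamma_{p,q}+\bar\gamma'_{p,q}$ has to be resolved). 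Your step (iii) only relates $\bar\gamma_{p,q}$ and $\bar\gamma'_{p,q}$ at the level of homology classes, which does not supply this chain-level cancellation; as written, your first bullet would ``prove'' that $\bar\gamma_{p,q}$ by itself is an $\h_n$-invariant, which would make the definition of $\tilde\gamma_{p,q}$ pointless and contradicts the computation above.

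Your outline for the second bullet (discarding $\delta_{p,q}$, $\beta_{p,q}$, $\rho_{p,q}$ via the Remark, checking $\bar\gamma_{p,q}$ is a cycle, and identifying $\pi_3^*[\bar\gamma_{p,q}]=[\gamma_{p,q}]$ with $\tilde\gamma_{p,q}$ homologous to $\bar\gamma_{p,q}$) is consistent with, and more detailed than, the paper's very terse treatment, so the essential repair needed is the invariance argument above.
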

\begin{proof}
Clearly, ${\bar{\gamma}}_{p,q}$ and $\bar{\gamma}'_{p,q}$ are  $\so(p,q)$-invariant, so is $\tilde{\gamma}_{p,q}.$ That $\tilde{\gamma}_{p,q}$ is an $\I_n$-invariant follows from the fact that  $[\frac{\p}{\p x^i}, {\bar{\gamma}}_{p,q}]=[\frac{\p}{\p x^i}, \bar{\gamma}'_{p,q}]$  for all $i=1\ldots n.$   The second assertion follows by definition of $\pi_3.$
\end{proof}

The following is the main result of the paper. It generalizes  the Leibniz homology of the Poincar\'e Lie algebra \cite[Corollary 4.5]{JL}.  

\begin{theorem}
There is an isomorphism of vector spaces $$HL_*(\h_n)\cong (\br\oplus\left\langle \tilde{\alpha}_{p,q}\right\rangle)\otimes T^*(\tilde{\gamma}_{p,q}),$$
 and an algebra isomorphism  $$HL^*(\h_n)\cong (\br\oplus\left\langle \tilde{\alpha}^d_{p,q}\right\rangle)\otimes T^*(\tilde{\gamma}^d_{p,q}),$$ 
where $$\tilde{\alpha}_n=\sum_{\sigma\in S_n}\mbox{sgn} (\sigma)\frac{\p}{\p x^{\sigma(1)}}\otimes\frac{\p}{\p x^{\sigma(2)}}\otimes\frac{\p}{\p x^{\sigma(3)}}\otimes\ldots\otimes\frac{\p}{\p x^{\sigma(n)}},$$ $$\tilde{\alpha}^d_n=\sum_{\sigma\in S_n}\mbox{sgn}(\sigma) dx^{\sigma(1)}\otimes dx^{\sigma(2)}\otimes\ldots\otimes dx^{\sigma(n)},$$   
$$
\begin{aligned}\bar{\gamma}^d_{p,q}=&\frac{1}{n!}\big(~\sum_{\substack{1\leq i<j\leq p, \\ \sigma\in S_{n-2}}}(-1)^{i+j+1}sgn(\sigma) X_{ij}\otimes dx^{\sigma(1)}\otimes\ldots\widehat{dx^{\sigma(i)}}\ldots\widehat{dx^{\sigma(j)}}\ldots\otimes dx^{\sigma(n)}\\ &- \sum_{\substack{p+1\leq i<j\leq n\\ \sigma\in S_{n-2}}} (-1)^{i+j+p+1}sgn(\sigma) X_{ij}\otimes dx^{\sigma(1)}\otimes\ldots\widehat{dx^{\sigma(i)}}\ldots\widehat{dx^{\sigma(j)}}\ldots\otimes dx^{\sigma(n)}\\ &+ \sum_{\substack{1\leq i\leq p\\ p+1\leq j\leq n\\ \sigma\in S_{n-2}}} (-1)^{i+j}sgn(\sigma)Y_{ij}\otimes dx^{\sigma(1)}\otimes\ldots\widehat{dx^{\sigma(i)}}\ldots\widehat{dx^{\sigma(j)}}\ldots\otimes dx^{\sigma(n)}~\big)
\end{aligned}
$$   and

 $$
\begin{aligned}\bar{\gamma}^d_{p,q}=&\frac{1}{n!}\big(~\sum_{\substack{1\leq i<j\leq p, \\ \sigma\in S_{n-2}}}(-1)^{i+j+1}sgn(\sigma) dx^{\sigma(1)}\otimes\ldots\widehat{dx^{\sigma(i)}}\ldots\widehat{dx^{\sigma(j)}}\ldots\otimes dx^{\sigma(n)}\otimes X_{ij}\\ &- \sum_{\substack{p+1\leq i<j\leq n\\ \sigma\in S_{n-2}}} (-1)^{i+j+p+1}sgn(\sigma) dx^{\sigma(1)}\otimes\ldots\widehat{dx^{\sigma(i)}}\ldots\widehat{dx^{\sigma(j)}}\ldots\otimes dx^{\sigma(n)}\otimes X_{ij}\\ &+ \sum_{\substack{1\leq i\leq p\\ p+1\leq j\leq n\\ \sigma\in S_{n-2}}} (-1)^{i+j}sgn(\sigma)dx^{\sigma(1)}\otimes\ldots\widehat{dx^{\sigma(i)}}\ldots\widehat{dx^{\sigma(j)}}\ldots\otimes dx^{\sigma(n)}\otimes Y_{ij}~\big)
\end{aligned}
$$
and $HL^*$ is afforded the Zinbiel algebra (dual leibniz algebra)\cite{LJ}.

\end{theorem}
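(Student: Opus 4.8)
## Proof Proposal

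The plan is to compute $HL_*(\h_n)$ via the Pirashvili spectral sequence associated to the short exact sequence $0\to\I_n\to\h_n\to\so(p,q)\to 0$, exactly as in the definite case treated in \cite{BGR} and following Lodder's structure theorem \cite{JL}. Since $\I_n$ is abelian, its Leibniz homology with trivial coefficients is $HL_*(\I_n)=T^*(\I_n)$, and more generally the relevant input is $H^{Lie}_*(\I_n;M)$ for the various $\h_n$-modules $M$ that arise ($\br$, $\I_n$, $\so(p,q)$, and $\h_n$ itself). The action of $\so(p,q)$ being reductive over $\br$ in the sense that taking invariants is exact on the relevant finite-dimensional pieces (this is where we pass to the complexification and use that $\so(n)$ is reductive, as in Lemmas \ref{dimi} and \ref{dimo}), the $E^2$-page is governed by the invariant modules $[\so(p,q)\otimes\I_n^{\wedge k}]^{\so(p,q)}$ and $[\I_n\otimes\I_n^{\wedge k}]^{\so(p,q)}$ and $[\I_n^{\wedge k}]^{\so(p,q)}$ computed in Section 3.

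First I would assemble the $\so(p,q)$-invariant part of the $E^1$ (or $E^2$) term: by Lemma \ref{alpha} the invariants of $\wedge^*(\I_n)$ contribute $\br$ in degree $0$ and $\langle\alpha\rangle$ in degree $n$; by Lemmas \ref{dimi}, \ref{delta}, \ref{beta} the module $[\I_n\otimes\wedge^*(\I_n)]^{\so(p,q)}$ is one-dimensional in degrees $1$ and $n-1$, spanned by $\delta_{p,q}$ and $\beta_{p,q}$; and by Lemmas \ref{dimo} and the subsequent two lemmas $[\so(p,q)\otimes\wedge^*(\I_n)]^{\so(p,q)}$ is one-dimensional in degrees $2$ and $n-2$, spanned by $\rho_{p,q}$ and $\gamma_{p,q}$. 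Next I would invoke the Remark: the antisymmetrizations of $\delta_{p,q}$ and $\beta_{p,q}$ die in the spectral sequence and $\rho_{p,q}$ is not a Leibniz cycle, so these classes do not survive — the argument is the same as in the proof of \cite[Theorem 5.2]{BGR}. What does survive from the $\so(p,q)$-factor is $\gamma_{p,q}$ in degree $n-2$, and Lemma \ref{gamma} identifies it: $\bar\gamma_{p,q}$ is a genuine cycle in $\h_n^{\otimes(n-1)}$, homologous to the $\h_n$-invariant $\tilde\gamma_{p,q}$, and the kernel computation there shows $K_k = \langle\tilde\gamma_{p,q}\rangle$ for $k=n-2$ and $0$ otherwise. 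Combining this with the volume class $\alpha$, whose Leibniz-homology avatar is the fully antisymmetrized $\tilde\alpha_{p,q}$ in degree $n$, gives generators $\tilde\alpha_{p,q}$ (degree $n$) and $\tilde\gamma_{p,q}$ (degree $n-1$).

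Then I would run Lodder's structure argument \cite{JL}: the Leibniz homology $HL_*(\h_n)$ is a free tensor algebra on the surviving classes modulo the Lie-homology relations, yielding the tensor algebra $T^*(\tilde\gamma_{p,q})$ on the single degree-$(n-1)$ generator, tensored with the two-term factor $\br\oplus\langle\tilde\alpha_{p,q}\rangle$ coming from degrees $0$ and $n$. This produces the claimed isomorphism $HL_*(\h_n)\cong(\br\oplus\langle\tilde\alpha_{p,q}\rangle)\otimes T^*(\tilde\gamma_{p,q})$. For the cohomological statement I would dualize: since everything in sight is finite-dimensional in each degree, $HL^*(\h_n)$ is the graded dual, with dual generators $\tilde\alpha^d_{p,q}$ and $\tilde\gamma^d_{p,q}$ written in terms of the $dx^i$ and the dual basis of $\so(p,q)$; the tensor-algebra structure dualizes to the shuffle (Zinbiel/dual-Leibniz) coalgebra structure \cite{LJ}, giving the algebra isomorphism $HL^*(\h_n)\cong(\br\oplus\langle\tilde\alpha^d_{p,q}\rangle)\otimes T^*(\tilde\gamma^d_{p,q})$ with the Zinbiel product.

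The main obstacle, as in \cite{BGR}, is controlling the differentials in the Pirashvili spectral sequence so as to confirm that exactly the classes $\delta_{p,q}$, $\beta_{p,q}$, $\rho_{p,q}$ are killed (or fail to be cycles) while $\gamma_{p,q}$ and $\alpha$ survive, and that no further classes appear in intermediate degrees. The indefinite signature does not change the abstract structure — the invariant theory runs through the complexification where $\so(p,q)_\C\cong\so(n)_\C$ — but the explicit signs attached to the split into $p$- and $q$-blocks (the $(-1)^{i+j+p+1}$ factors, the minus sign between the $\bar\gamma$ and $\bar\gamma'$ halves) must be tracked carefully to verify that $\bar\gamma_{p,q}$ is genuinely a cycle and that $\tilde\gamma_{p,q}=\bar\gamma_{p,q}-\bar\gamma'_{p,q}$ is $\h_n$-invariant; this is essentially the content of Lemma \ref{gamma}, which I would lean on directly.
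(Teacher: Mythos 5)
Your proposal is correct and follows essentially the same route as the paper: the paper's proof simply invokes Lodder's structure theorem to write $HL_*(\h_n)\cong[\wedge^*(\I_n)]^{\so(p,q)}\otimes T(K_*)$, identifies the two factors via Lemma \ref{alpha} and Lemma \ref{gamma} (with the Remark disposing of $\delta_{p,q}$, $\beta_{p,q}$, $\rho_{p,q}$), and dualizes for $HL^*$ exactly as you describe. Your additional commentary on the Pirashvili spectral sequence is just the machinery packaged inside the structure theorem, so there is no substantive difference.
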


\begin{proof}
We have by Lodder's structure theorem  \cite[Lemma 3.6]{JL}  that $$HL_*(\h_n)\cong [\wedge^*(\I)]^{\so(p,q)}\otimes T(K_*)~~~\mbox{where} ~~~T(K_*)=\sum_{n\geq 0}K_*^{\otimes n}$$ denotes the tensor algebra. This combined with lemma \ref{alpha} and  lemma \ref{gamma} yield  the graded vector space isomorphism $$HL_*(\h_n)\cong (\br\oplus\left\langle \tilde{\alpha}_n\right\rangle)\otimes T^*(\tilde{\gamma}_n).$$
 For the cohomology, we use the vector space isomorphism $$HL^*(\h_n;~\br)\cong Hom(HL_*(\h_n;~\br),~\br),$$ to conclude that $$HL^*(\h_n)\cong (\br\oplus\left\langle \tilde{\alpha}^d_n\right\rangle)\otimes T^*(\tilde{\gamma}^d_n),$$ where $\tilde{\alpha}^d_{p,q}$ and $\tilde{\gamma}^d_{p,q}$ are respectively  dual to $\tilde{\alpha}_{p,q}$ and $\tilde{\gamma}_{p,q}$ with respect to the basis of $\h_n.$
\end{proof}

\subsection*{Concluding Remark}
We have determined the Leibniz homology of an abelian extension $\h_n\cong\I_n\oplus\so(p,q)$ of the Lie algebra $\so(p,q)$ where  $\so(p, q)$ acts on the real vector space $\I_n$  so that $\I_n$ is the standard representation $R^n, (n = p+q).$  A similar, but more interesting result may be obtained if one considers a more general approach. More precisely, let $M$ be a Lie algebra with $[a, b] = 0$ for all $a, b \in M .$ Then the vector space $\h$ of all linear combinations of elements in $\so(p, q)$ with elements in $M$ is an abelian  extension of $\so(p,q)$ by $M$, and $ \h$ is a Lie algebra via the bracket
$$[g_1 + m_1,  g_2 + m_2] = [g_1, g_2] + [g_1, m_2] - [g_2, m_1]$$ where $g_1,g_2\in\so(p,q)$,  $m_1,m_2\in M,$ and $[g, m]$ is the action of $g$ on $M.$   This fits into the Lodder's structure theorem for Leibniz
homology and the map  $so(p, q)  \rightarrow M$ of Lie modules  \underline{\textbf{may be}}  captured as a Leibniz homology class. This was suggested by Jerry Lodder and is part of a future work.

\subsection*{Acknowledgment}
I am indebted to  Jerry Lodder for advises and I benefited from
conversations with David Vogan and  Birne Binegar about the representation theory of $\so(p,q).$


\end{document}